\def\newblock{\hskip .11em plus .33em minus .07em}
  \newcommand{\pr}{\mathbf{P}}
\def\qed{\relax\ifmmode\hskip2em \Box\else\unskip\nobreak\hskip1em $\Box$\fi}
\newcommand{\eps}{\varepsilon}
\newcommand{\e}{{\mathbf E}}
\newcommand{\p}[1]{{\mathbf P}\left\{#1\right\}}
\newcommand{\bea}{\begin{eqnarray}}
\newcommand{\eea}{\end{eqnarray}}
\newtheorem{theorem}{Theorem}
\newtheorem{lemma}[theorem]{Lemma}
\newtheorem{dfn}[theorem]{Definition}
\newtheorem{conj}{Conjecture}
\newtheorem{cla}[theorem]{Claim}
\newcommand{\scr}[1]{\mathcal{#1}}
\newcommand{\mass}[1]{\mbox{\em mass}(#1)}
\newcommand{\orr}[1]{\overrightarrow{#1}}
\begin{document}
\title[Connectivity for bridge-addable monotone graph classes]{Connectivity for bridge-addable monotone graph classes \thanks{This research was supported by the Banff International Research station and the McGill Bellairs Research Institute}}

\author{Louigi Addario-Berry}
\address{Department of Mathematics and Statistics, McGill University, 805
  Sherbrooke Street West,  	Montr\'eal, Qu\'ebec, H3A 2K6, Canada}
\email{louigi@math.mcgill.ca}

\author{Colin McDiarmid}
\address{Department of Statistics, 1 South Parks Road, Oxford, OX1 3TG, UK}
\email{cmcd@stats.ox.ac.uk}

\author{Bruce Reed}
\address{School of Computer Science, McGill University, 3480 University Street, Montreal, Quebec, H3A 2A7, Canada}
\email{breed@cs.mcgill.ca}

  \maketitle


\begin{abstract}

A class $\scr{A}$ of labelled graphs  is {\em bridge-addable} if 
if for all graphs $G$ in $\scr{A}$ and all vertices $u$ and $v$ in distinct connected components of $G$, 
the graph obtained by adding an edge between $u$ and $v$ is also in $\scr{A}$; 
the class $\scr{A}$ is {\em monotone} if for all $G \in \scr{A}$ and all subgraphs $H$ of $G$, we have $H \in \scr{A}$. 
We show that for any bridge-addable, monotone class $\scr{A}$ whose elements have vertex set $\{1,\ldots,n\}$, 
the probability that a uniformly random element of $\scr{A}$ is connected is at least $(1-o_n(1)) \, e^{-\frac12}$, 
where $o_n(1)\rightarrow 0$ as $n\rightarrow \infty$.
This establishes the special case of the conjecture of \cite{mcdiarmid06random} when the condition of monotonicity is added.
This result has also been obtained independently by Kang and Panagiotiou (2011). 
\end{abstract}


 \section{Introduction}
  \label{sec.intro}

    Given a class $\scr{A}$ of graphs, we say that $\scr{A}$ is {\em bridge-addable}
    (or {\em weakly addable}) if for all graphs $G$ in $\scr{A}$ and all vertices
    $u$ and $v$ in distinct connected components of $G$, the graph obtained by adding an
    edge between $u$ and $v$ is also in $\scr{A}$.
    The concept of bridge-addability was introduced in 
    McDiarmid, Steger and Welsh~\cite{mcdiarmid05random}
    in the course of studying random planar graphs.
    It was shown there
    that, for a uniformly random element of a finite non-empty
    bridge-addable class $\scr{A}$ of labelled graphs,
    the probability that it is connected is at least~$e^{-1}$.

    As well as the class of planar graphs, other examples of bridge-addable graph classes
    include forests, graphs with tree-width at most $k$,
    graphs embeddable on any fixed surface,
    and more generally any minor-closed graph class with cut-point-free excluded minors; 
    triangle-free graphs,
    and more generally $H$-free graphs for any two-edge-connected 
    graph $H$; and $k$-colourable graphs.

    It is well-known that there are $n^{n-2}$ trees on $n$ labelled vertices, Cayley~\cite{cayley89trees}.
    Together with a result of R\'enyi~\cite{renyi59} 
    (see also Moon~\cite{moon70counting}) that the corresponding number of forests
    is asymptotic to $e^{\frac12} n^{n-2}$, we see that,
    for a uniformly random forest on $n$ labelled vertices, the probability that it is
    connected is asymptotically $e^{-\frac12}$.

    In McDiarmid, Steger and Welsh~\cite{mcdiarmid06random} it was suggested that if connectivity
    is desired then the worst possible example of a bridge-addable graph class is the class of forests.
    More precisely, they conjectured that the lower bound of $e^{-1}$ on the probability of connectedness for
    a bridge-addable graph class can be improved asymptotically to $(1+o(1))e^{-\frac12}$.
    (They assumed also that the graph class was closed under isomorphism.)
    Recently, Balister, Bollob{\'a}s and Gerke~\cite{bbg07,bbg10} took a first step towards proving this conjecture,
    proving an asymptotic lower bound of $e^{-0.7983}$.

    Observe that the examples above of bridge-addable graph classes,
    and many other interesting examples of such graph classes,
    also satisfy the property of being {\em monotone} (decreasing);
    that is, given a graph $G$ in $\scr{A}$, each graph obtained by deleting edges from $G$ is also in $\scr{A}$.
    In this paper we investigate the probability of connectivity of a uniformly random element of
    a monotone bridge-addable graph class; for such graph classes, we
    prove the conjectured lower bound.
    %
    %
    Our method relies upon a reduction to weighted random forests; and in particular to the properties of weighted random trees. 
    

  For several bridge-addable graph classes, including some of those mentioned above,
  the asymptotic probability of connectedness has recently been determined --
  see the last section below.
%
%
%

    Let us now state our theorem.
    A {\em bridge} in a graph $G$ is an edge $e$
    such that $G-e$ has strictly more connected components than $G$.
    We say that a class $\scr{A}$ of graphs is {\em bridge-alterable} if
    for any graph $G$ and bridge $e$ in $G$,
    $G$ is in $\scr{A}$ if and only if $G-e$ is.
    We remark that if $\scr{A}$ is bridge-alterable then it is bridge-addable,
    and if it is both bridge-addable and monotone then it is bridge-alterable.

    %
    %
  \begin{theorem}\label{thm:main}
  For any $\eps>0$ there exists $W_0$ such that, if $W \geq W_0$ and
  $\scr{A}$ is a non-empty bridge-alterable class of graphs on $\{1,\ldots,W\}$,
  and if $\bf{G}$ is a uniformly random element of $\scr{A}$, then
  \begin{equation}
    \label{eq:main}
    \p{\bf{G}\mbox{ is connected}} \geq (1-\eps) e^{-\frac12}.
  \end{equation}
  \end{theorem}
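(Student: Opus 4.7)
The plan is to use bridge-alterability to decompose each $G \in \scr{A}$ canonically into its bridges and its blocks (maximal $2$-edge-connected subgraphs), reducing the theorem to an inequality about weighted random forests. Repeatedly deleting bridges of $G$---each of which remains a bridge until removed, since a bridge is exactly an edge in no cycle---shows, via bridge-alterability, that the disjoint union $H_G := \bigsqcup_i B_i$ of blocks on vertex sets $V_1,\ldots,V_k$ lies in $\scr{A}$. Conversely, starting from $H_G$, one may adjoin any \emph{lifted forest} $F$ on $\{1,\ldots,W\}$---by which I mean a forest on $\{1,\ldots,W\}$ whose contraction to $[k]$ is still a forest---edge by edge, producing another element of $\scr{A}$. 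This gives a bijection between $\{G \in \scr{A} : \mathbf{B}(G) = \mathbf{B}\}$ and the set of lifted forests compatible with the partition $(V_1,\ldots,V_k)$, under which $\mathbf{G}$ is connected precisely when its lifted forest is a spanning lifted tree.

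Conditioning on the blockset therefore turns the connectedness probability into a ratio of weighted forest counts. A weighted Cayley (matrix-tree) formula applied to $K_k$ with vertex weights $n_i := |V_i|$ yields exactly $W^{k-2}\prod_i n_i$ spanning lifted trees. Writing $F(n_1,\ldots,n_k)$ for the total number of lifted forests,
\[
\p{\mathbf{G}\text{ is connected}} = \sum_{\mathbf{B}} \p{\mathbf{B}(\mathbf{G})=\mathbf{B}}\cdot q(n_1,\ldots,n_k),\qquad q(n_1,\ldots,n_k) := \frac{W^{k-2}\prod_i n_i}{F(n_1,\ldots,n_k)},
\]
so it suffices to prove $q(n_1,\ldots,n_k) \geq (1-\eps) e^{-1/2}$ uniformly in $k$ and in $(n_1,\ldots,n_k)$ with $\sum_i n_i = W$, provided $W \geq W_0(\eps)$.

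The core step is this uniform bound on $q$. When all $n_i=1$, $F(1,\ldots,1)$ is the total number of labelled forests on $W$ vertices, and $q(1,\ldots,1) \to e^{-1/2}$ by R\'enyi's classical asymptotic. For general block sizes I would argue by \emph{splitting monotonicity}: show that splitting a block of size at least $2$ into two smaller blocks does not increase $q$ (small cases such as $q(2,1,1) > q(1,1,1,1)$ corroborate this direction). Given splitting monotonicity, the infimum of $q$ over partitions of $W$ is attained at the all-singletons configuration, and R\'enyi's theorem closes the loop. An alternative is to evaluate $F$ via a determinantal identity for the weighted Laplacian of $K_k$ and then reshape $q$ into a Poisson-type sum whose leading behaviour is $e^{-1/2}$ independently of the $n_i$'s.

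The main obstacle is precisely this uniform lower bound on $q$, especially for partitions mixing a few large blocks with many singletons, where the weighted forest polynomial resists clean factorisation. The splitting approach requires a nontrivial combinatorial inequality on $F$ (roughly, that splitting one $n_i$ into $n_i',n_i''$ changes $F$ by a factor of order $W/n_i$); the generating-function approach requires careful coefficient extraction to keep the $o(1)$ errors controlled uniformly across partitions. Either route reduces the theorem to R\'enyi's asymptotic via the forest decomposition unlocked by bridge-alterability.
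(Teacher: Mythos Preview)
Your reduction is correct and matches the paper exactly: partition $\scr{A}$ by the bridgeless graph $b(G)$, observe that each class $[G]$ is in bijection with $\{\text{forests on }[k]\}$ weighted by $\prod_i n_i^{d_F(i)}$, and conclude that the conditional connectedness probability equals $q(\orr{n})=K'/K$ with $K'=W^{k-2}\prod_i n_i$ by weighted Cayley. This is precisely the content of the paper's Lemmas~\ref{eq:hf_con} and~\ref{lem.prufer}.

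The gap is exactly where you locate it: the uniform bound $q(\orr{n})\geq (1-\eps)e^{-1/2}$. Your proposal leaves this entirely open. The splitting-monotonicity route would require showing that replacing $n_i$ by $n_i',n_i''$ with $n_i'+n_i''=n_i$ multiplies $K$ by at least $n_i' n_i'' W/n_i$ (since that is the exact factor by which $K'$ changes); your small-case checks are consistent with this, but you give no mechanism for proving it, and none is apparent --- there is no obvious coupling between $\mathbf{F}_{\orr{n}}$ and its split version, and $K$ does not factor usefully under splitting. The paper does \emph{not} proceed this way and gives no hint that splitting monotonicity holds.

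What the paper actually does for this step is substantially more intricate. First, a double-counting flow argument (Lemmas~\ref{lem:mass_eq}--\ref{lem:smalln2.5}) shows $\p{\mathbf{F}\in\scr{F}_{n,i+1}}\leq \tfrac{1}{i}\tfrac{n}{W}\,\p{\mathbf{F}\in\scr{F}_{n,i}}$, which already handles the regime $n/W\leq 1/2$. The hard case $n/W>1/2$ is reduced (via a multiplicativity argument, Lemmas~\ref{lem:connect_1}--\ref{lem:monotone_wc}) to the single inequality $\p{\mathbf{F}\in\scr{F}_{n,2}}\leq \tfrac{1+\eps}{2}\,\p{\mathbf{F}\in\scr{F}_{n,1}}$. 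That inequality is then proved by rewriting the ratio as $\sum_{k\leq W/2}\e{c(\mathbf{T},k)}/(k(W-k))$, using the Pr\"ufer representation of $\mathbf{T}$ to get an exact formula for the probability that a given vertex set is a pendant subtree, and finally a generating-function computation that collapses the resulting double sum to $\sum_{i\geq 1} i^{i-2}/(i!\,e^i)=\tfrac12$. None of this resembles a monotonicity argument; it is closer in spirit to your vaguer ``Poisson-type sum'' alternative, but the details are far from routine and constitute the real content of the theorem. As written, your proposal has identified the correct reduction but not supplied a proof of the lemma on which everything hinges.
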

  This result was announced independently by Kang and Panagiotou while the present paper was under (slow) revision
  following referee reports,
  with at that time a weaker form of the above theorem.  Their proof in~\cite{kp2011} 
  starts like our proof here but then proceeds very differently, with a clever induction
  involving merging vertices when counting forests with two components 
  (where the vertices correspond to bridgeless graphs).
  
  In the next section we describe a reduction which allows us to establish Theorem~\ref{thm:main} by
  proving that, for a random forest with an appropriately chosen distribution, full connectivity 
  (the event that the forest is a tree) is almost twice as likely as the event that the forest has precisely two connected components (the precise statement appears as Lemma~\ref{lem:ratio_1_2}, below). 
  %
  In the following two sections we explain how
  Lemma~\ref{lem:ratio_1_2} will yield Theorem~\ref{thm:main}, and then prove
  Lemma~\ref{lem:ratio_1_2}.
  Finally we make some concluding remarks.


  \section{A reduction to weighted forests}
  \label{sec.body}
  In this section we assume that the conditions of Theorem~\ref{thm:main} are satisfied. 
  For any class $\scr{A}$ of graphs, let ${\scr{A}}_n$ denote the set of graphs
  in $\scr{A}$ on the vertex set $\{1,\ldots,n\}$.  
  It is convenient to consider graphs on $W$ vertices.
  We shall prove Theorem \ref{thm:main} by partitioning $\scr{A}_W$ and showing that an inequality
  such as (\ref{eq:main}) holds for a uniformly random element of each block of the partition.
  (This step of our proof is essentially Lemma 2.1 of \cite{bbg07}.)

  \begin{dfn}
  Given a graph $G$, let $b(G)$ be the graph obtained by removing all bridges from $G$.
  We say $G$ and $G'$ are {\em equivalent} if $b(G)=b(G')$, and in this case write $G \sim G'$.
  For a graph $G$, let $[G]$ be the set of graphs $G'$ for which $b(G')=b(G)$.
  \end{dfn}
  It is easily seen that $\sim$ is an equivalence relation on graphs, and
  thus we always have $[G]=[b(G)]$. Furthermore, if $G \in \scr{A}_W$ then as
  $\scr{A}$ is closed under deleting bridges, $b(G) \in \scr{A}_W$,
  and as $\scr{A}$ is bridge-addable, we have $[G] \subseteq \scr{A}_W$.
  It follows that $\scr{A}_W$ can be written as a union of
  some set of disjoint equivalence classes $[G_1],[G_2],\ldots$.
  To prove Theorem \ref{thm:main}, we will in fact prove:
  \begin{cla}
  \label{cla:impliesmain}
  For any graph $G \in \scr{A}_W$, if $H$ is a uniformly random element of $[G]$ then
  \[
  \p{H\mbox{ is connected}} \geq e^{-\frac12+o(1)},
  \]
  where $o(1) \rightarrow 0$ as $W \rightarrow \infty$.
  \end{cla}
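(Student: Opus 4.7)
The plan is to reduce $[G]$ to a weighted random forest model and then apply Lemma~\ref{lem:ratio_1_2} together with a level-by-level ratio argument to extract the $e^{-1/2}$ bound.

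Set $B = b(G)$, let $C_1, \ldots, C_k$ be its connected components, and write $n_i = |C_i|$, so $\sum_{i=1}^k n_i = W$. Because $\scr{A}$ is bridge-alterable and $b(H) = B$ for every $H \in [G]$, each such $H$ equals $B \cup S$ where $S$ is a set of edges, each of which is a bridge of $H$. Every edge in $S$ joins two distinct components $C_i, C_j$ of $B$ (otherwise it would lie on a cycle of $H$), and the super-graph on $[k]$ obtained from $S$ by contracting each $C_i$ to a single super-vertex must be a simple forest --- a multi-edge or cycle in this super-graph would place some $e \in S$ on a cycle of $H$, contradicting that $e$ is a bridge. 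Conversely, every simple forest $F$ on $[k]$, together with an endpoint assignment $\phi(\{i,j\}) \in C_i \times C_j$ for each $\{i,j\} \in F$, determines (by iterated bridge-addability) a distinct element of $[G]$. The map $H \mapsto F_H$ sending $H$ to its super-forest therefore has fibre of size $w(F) := \prod_{\{i,j\} \in F} n_i n_j$ over each $F$. Setting $Z_c := \sum_{F:\, c\text{-component}} w(F)$, the uniform law on $[G]$ pushes forward to the $w$-weighted law on $F_H$, and $H$ is connected iff $F_H$ is a tree, so
\[
\p{H \text{ is connected}} \;=\; \frac{Z_1}{\sum_{c \geq 1} Z_c}.
\]

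Lemma~\ref{lem:ratio_1_2} will furnish the base ratio $Z_1 \geq 2(1 - o(1))\, Z_2$. To obtain the Poisson$(1/2)$-type bounds $Z_{c+1} \leq (1 + o(1))\, Z_c/(2c)$ at every level $c \geq 1$, I would use the identity
\[
(k - c)\, Z_c \;=\; Z_{c+1} \cdot \mathbf{E}_{c+1}\!\left[\sum_{i<j} n_{A_i} n_{A_j}\right],
\]
obtained by double-counting pairs $(F, e)$ with $F$ a $(c+1)$-component forest on $[k]$ and $e$ an edge between two of its components (the $A_1, \ldots, A_{c+1}$ being the super-vertex classes of $F$ under the weighted conditional law, and $n_A := \sum_{i \in A} n_i$). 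The required inequality becomes $\mathbf{E}_{c+1}[\sum_{i<j} n_{A_i} n_{A_j}] \geq 2c(k-c)(1 - o(1))$: for $c = 1$ this is precisely Lemma~\ref{lem:ratio_1_2}, while for $c \geq 2$ I expect it to follow by applying Lemma~\ref{lem:ratio_1_2} to the contracted weighted-forest problem obtained after merging all but two of the $c+1$ components into a single super-block. Summing the resulting geometric-type bound gives $\sum_{c \geq 1} Z_c \leq e^{1/2}(1 + o(1))\, Z_1$, whence the claim.

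The main obstacle is the base case $c = 1$ --- Lemma~\ref{lem:ratio_1_2} itself --- which requires a delicate analysis of the weighted measure on 2-component partitions of $[k]$, showing that balanced partitions (with $n_A n_B$ close to $\lfloor W^2/4 \rfloor$) dominate the total mass. The iteration to $c \geq 2$ is by comparison easier, as the sum $\sum_{i<j} n_{A_i} n_{A_j}$ then has $\binom{c+1}{2}$ positive terms and more averaging slack is available.
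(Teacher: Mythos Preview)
Your reduction to weighted forests and the double-counting identity $(k-c)\,Z_c = Z_{c+1}\cdot \mathbf{E}_{c+1}\!\left[\sum_{i<j} n_{A_i}n_{A_j}\right]$ are correct and match the paper's setup (its Lemmas~\ref{eq:hf_con} and~\ref{lem:mass_eq}). The overall strategy---use Lemma~\ref{lem:ratio_1_2} for the base ratio and bootstrap to higher $c$---is exactly the paper's; the paper phrases the bootstrap directly as $Z_{c+1} \leq \frac{\gamma}{c}Z_c$ (Lemmas~\ref{lem:connect_1} and~\ref{lem:monotone_wc}) rather than via your equivalent expectation inequality.

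There are, however, two genuine gaps in your outline. First, you give no tail control. Lemma~\ref{lem:ratio_1_2} carries an $o(1)$ error as $W\to\infty$, and after bootstrapping you can only secure $Z_{c+1} \leq \frac{1+\eps}{2c}Z_c$ for $c$ up to some fixed $j$ (with $W$ large depending on $j$); you cannot let $c\to\infty$ uniformly. The paper handles this by a separate crude bound: the deterministic inequality $\sum_{i<j} n_{A_i}n_{A_j} \geq c(W-c)$ gives $Z_{c+1} \leq Z_c/c$ for \emph{all} $c$ (Lemma~\ref{lem:smalln2.5}), whence $\sum_{c>j} Z_c \leq 2/j!\cdot Z_1$. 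Without such a tail estimate your sum $\sum_{c\geq 1}Z_c$ does not close.

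Second, your bootstrap for $c\geq 2$ is not correct as stated. ``Merging all but two of the $c+1$ components into a single super-block'' does not yield a weighted-forest instance to which Lemma~\ref{lem:ratio_1_2} applies: that lemma compares trees to two-component forests on a fixed vertex set, not three-block configurations. What works (and what the paper does in Lemma~\ref{lem:connect_1}) is to \emph{condition} on $c-1$ specific components---chosen to be the smallest ones---and apply Lemma~\ref{lem:ratio_1_2} on the remaining vertex set $V'$. Picking the smallest $c-1$ components is essential: it forces $w(V') > W/(c+1)$, so for fixed $c$ and $W$ large the hypothesis of Lemma~\ref{lem:ratio_1_2} is met on $V'$. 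Your sketch neither identifies which components to freeze nor explains why the residual weight stays large, so the appeal to Lemma~\ref{lem:ratio_1_2} is not justified.
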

  Clearly, Theorem \ref{thm:main} immediately follows from Claim \ref{cla:impliesmain},
  and it thus remains to prove Claim \ref{cla:impliesmain}.
  Fix a bridgeless graph $G$ on vertex set $\{1,\ldots,W\}$
  and let $\scr{B}=[G]$.  Write $C_1,\ldots,C_n$ for the components of $G$, and let $w_i=|V(C_i)|$
   for $i=1,\ldots,n$, so $W = \sum_{i=1}^n w_i$.
   We remark that since the components $C_1,\ldots,C_n$ are bridgeless,
   either $w_i=1$ or $w_i\geq 3$ for all $i\in\{1,\ldots,n\}$ (though we shall not use this fact).
   We denote by $\orr{w}$ the vector $(w_1,\ldots,w_n)$.
  We use $\orr{w}$ to define a probability measure on the set $\scr{F}_n$ of
  forests with vertex set $\{1,\ldots,n\}$.
  Given $F \in \scr{F}_n$, let
\[
\mass{F} = \mbox{\em mass}_{\orr{w}}(F) = \prod_{i=1}^n w_i^{d_F(i)},
\]
  where $d_F(i)$ denotes the degree of vertex $i$ in the forest $F$.
  Also, let $K=\sum_{F \in \scr{F}_n} \mass{F}$, and let ${\bf F}$ be a random element of $\scr{F}_n$ with
$\p{{\bf F}=F}=\mass{F}/K$ for all $F \in \scr{F}_n$. We say ${\bf F}$ is {\em distributed according to}
$\orr{w}$; when we wish to highlight the distribution of ${\bf F}$,
we will sometimes write ${\bf F}_{\orr{w}}$ in place of ${\bf F}$.
For our purposes, the key fact about such a random forest is the
following:
\begin{lemma}\label{eq:hf_con}
  For a uniformly random element ${\bf H}$ of $\scr{B}$,
  \[
    \p{{\bf H}\mbox{ is connected}} = \p{{\bf F}\mbox{ is connected}}.
  \]
\end{lemma}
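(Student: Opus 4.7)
I would prove the lemma by setting up an explicit weight-preserving correspondence between the elements of $\scr{B}$ and the forests in $\scr{F}_n$ (with each forest $F$ counted with multiplicity $\mass{F}$), under which connected graphs in $\scr{B}$ correspond precisely to spanning trees of $\{1,\ldots,n\}$. The main (though minor) obstacle is verifying this ``bridge $\Leftrightarrow$ forest'' correspondence carefully.

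The starting observation is that, since $G$ is bridgeless, $b(G)=G$, and so $H \in \scr{B}=[G]$ if and only if $E(H) \supseteq E(G)$ and the set $S := E(H)\setminus E(G)$ is precisely the bridge set of $H$. I would first note that every edge of $S$ must join vertices in two distinct components $C_i,C_j$ of $G$: an edge with both endpoints in some $C_i$, together with any path between them inside $C_i$, forms a cycle in $H$ and hence cannot be a bridge. Contracting each $C_i$ to a single vertex $i$ therefore sends $H$ to a multigraph $\widetilde{F}$ on $\{1,\ldots,n\}$.

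Next I would establish that $\widetilde{F}$ is a simple forest if and only if every edge of $S$ is in fact a bridge of $H$. If $\widetilde{F}$ contains a cycle (or a double edge), then lifting it back through paths inside the relevant $C_k$'s produces a cycle in $H$ using at least one edge of $S$, contradicting the bridge property. Conversely, if $\widetilde{F}$ is a forest and $e=uv \in S$ with $u \in C_i$, $v \in C_j$, then the components of $H-e$ are precisely the unions of those $C_k$'s whose labels lie in a common component of $\widetilde{F}-\{i,j\}$, so $e$ separates $u$ from $v$, as required. Thus the map $H \mapsto \widetilde{F}$ is a well-defined surjection from $\scr{B}$ onto $\scr{F}_n$.

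Finally, for each forest $F \in \scr{F}_n$ I would count the fibre: the number of $H \in \scr{B}$ with $\widetilde{F}=F$ is the number of ways of picking, for every edge $\{i,j\}\in E(F)$, one endpoint in $C_i$ and one in $C_j$, namely
\[
\prod_{\{i,j\}\in E(F)} w_i w_j \;=\; \prod_{i=1}^n w_i^{d_F(i)} \;=\; \mass{F}.
\]
Since $H$ is connected exactly when the associated forest $F$ is connected (i.e.\ a spanning tree of $\{1,\ldots,n\}$), summing over $F$ gives $|\scr{B}|=K$ and
\[
\p{\mathbf{H}\mbox{ is connected}} = \frac{\sum_{F\text{ connected}} \mass{F}}{K} = \p{\mathbf{F}\mbox{ is connected}},
\]
which is the claimed equality.
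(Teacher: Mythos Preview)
Your proof is correct and follows essentially the same approach as the paper: define the contraction map from $\scr{B}$ to $\scr{F}_n$ sending each $C_i$ to the vertex $i$, observe that the fibre over $F$ has size $\prod_i w_i^{d_F(i)}=\mass{F}$, and note that connectedness is preserved. You simply give more detail than the paper in verifying that the contracted graph is indeed a simple forest (the paper asserts this without justification); one minor slip is that ``$\widetilde{F}-\{i,j\}$'' in your converse step should read ``$\widetilde{F}$ minus the edge $\{i,j\}$'' rather than minus the vertices.
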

%
%
\begin{proof}
    We construct a flow from $\scr{B}$ to $\scr{F}_n$ in the following fashion:
    given $G\in \scr{B}$, let $f(G)$ be the graph obtained from $G$ by contracting
    $C_i$ to a single point for each $i=1,\ldots,n$; then $f(G)\in\scr{F}_n$,
    and for each $F \in \scr{F}_n$, the set $f^{-1}(F)$ has cardinality precisely
    $\prod_{i=1}^n w_i^{d_F(i)}$. Since $G \in \scr{B}$ in connected if and only
    if $f(G)$ is connected, it follows that
    \begin{eqnarray}
        \p{{\bf H\mbox{ is connected}}} & = & \frac{|\{G\in\scr{B}:G\mbox{ is connected}\}|}{|\scr{B}|}\nonumber\\
                & = & \frac{\sum_{\{F\in\scr{F}_n:F\mbox{ is connected}\}}|f^{-1}(F)|}{\sum_{F\in\scr{F}_n}|f^{-1}(F)|}\nonumber\\
                & = & \frac{\sum_{\{F\in\scr{F}_n:F\mbox{ is connected}\}}\mass{F}}{K}\nonumber\\
                & = & \p{{\bf F}\mbox{ is connected}}\hfill\nonumber
    \end{eqnarray}
    as required.
\end{proof}
To prove Claim \ref{cla:impliesmain}, it therefore suffices to show that for such a random forest ${\bf F}$,
$\p{{\bf F}\mbox{ is connected}} \geq e^{-\frac12+o(1)}$, where $o(1)$ tends to zero as $W \to \infty$.

For $i=1,\ldots,n$, let $\scr{F}_{n,i}$ be the set of elements of
$\scr{F}_n$ with $i$ components (so $F \in \scr{F}_{n,1}$ precisely if $F$ is connected). For larger $i$ set $\scr{F}_{n,i}=\emptyset$.
It turns out that bounds on $\p{{\bf F}\mbox{ is connected}}$ follow from
bounds on the ratio between $\p{F\in\scr{F}_{n,2}}$ and $\p{F\in\scr{F}_{n,1}}$. More precisely, Claim
\ref{cla:impliesmain} follows from Lemma \ref{eq:hf_con} and the following lemma.
  \begin{lemma} \label{lem:ratio_1_2}
    For all $\eps > 0$, for $W$ sufficiently large, for all
    $\orr{w}=(w_1,\ldots,w_n)$ with $\sum_{j=1}^n w_j=W$,
    \begin{equation} \label{eq:ratio_1_2}
        \p{{\bf F} \in \scr{F}_{n,2}} \leq (1+\eps) \frac12 \ \p{{\bf F} \in \scr{F}_{n,1}}.
    \end{equation}
  \end{lemma}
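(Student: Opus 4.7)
The plan is to convert the ratio $R := g_n/t_n$ of $2$-forest to tree mass into an explicit weighted sum over subsets of $[n]$, and then bound that sum using two identities about a weighted random tree.

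By the generalized Cayley formula, $t_n = (\prod_{i=1}^n w_i)\,W^{n-2}$, and decomposing a $2$-forest by the vertex sets of its components gives $g_n = \tfrac{1}{2}(\prod_i w_i) \sum_{S} W_S^{|S|-2}\, W_{S^c}^{|S^c|-2}$, where $W_S = \sum_{i \in S} w_i$ and the sum is over proper non-empty $S \subset [n]$ (with each unordered partition counted twice, hence the factor $\tfrac{1}{2}$). Setting $p_S := W_S/W$, the lemma reduces to the analytic bound
\[
\sum_{S} p_S^{|S|-2}(1-p_S)^{n-|S|-2} \leq (1+\eps)\,W^2
\]
for $W$ large enough. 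This quantity is invariant under the rescaling $\vec{w} \mapsto N\vec{w}$, so the hard case is when the weight profile cannot be further scaled, i.e.\ when many $w_i$ equal $1$.

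To attack this I would establish two auxiliary identities for a weighted random tree $\mathbf{T}$ on $[n]$ with distribution proportional to $\mass{T}$. Double-counting pairs $(T,e)$ with $e$ an edge of $T$ yields $\sum_{F \in \scr{F}_{n,2}} \mass{F}\, W_{A_F} W_{B_F} = (n-1)\,t_n$, so that if $\mathbf{F}$ is a $2$-forest sampled with probability $\propto \mass{\mathbf{F}}$ then $\mathbf{E}[W_A W_B] = (n-1)/R$. Applying Cayley separately to the two sides of a hypothetical edge cut yields
\[
\mathbf{P}\big(S \text{ is the vertex set of one side of an edge cut of } \mathbf{T}\big) = p_S^{|S|-1}(1-p_S)^{n-|S|-1},
\]
and since $\mathbf{T}$ has $n-1$ edges (hence $2(n-1)$ pendant sides), summing over $S$ gives the identity $\sum_S p_S^{|S|-1}(1-p_S)^{n-|S|-1} = 2(n-1)$. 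Using the algebraic rewriting $p^{k-2}(1-p)^{n-k-2} = p^{k-1}(1-p)^{n-k-1}/[p(1-p)]$, the sum to be bounded becomes $2(n-1)\, \mathbf{E}_\mu[1/(p_S(1-p_S))]$, where $\mu(S) \propto p_S^{|S|-1}(1-p_S)^{n-|S|-1}$ is the distribution obtained by sampling $\mathbf{T}$, a uniform edge of $\mathbf{T}$, and a uniform side of that edge. The lemma then becomes
\[
\mathbf{E}_\mu\!\left[\frac{1}{p_S(1-p_S)}\right] \leq \frac{(1+\eps)\,W^2}{2(n-1)},
\]
which is tight in the all-ones case $w_i = 1$ (so $n = W$), via R\'enyi's identity $\sum_{k \geq 1} k^{k-2}/(k!\,e^k) = 1/2$.

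The technical heart is the final estimate, which I would carry out by splitting the support of $\mu$ according to $\min(W_S, W - W_S)$. For $S$ with both sides of weight $\Theta(W)$, the integrand $1/(p_S(1-p_S))$ is $O(1)$ and contributes $O(1)$ to $\mathbf{E}_\mu$, which is negligible compared to $W^2/(2(n-1)) \geq W/2$. The delicate regime is $\min(W_S, W - W_S) = k$ small: use $(1-p_S)^{n-|S|-1} \leq e^{-p_S(n-|S|-1)}$ and group singleton contributions by weight class (with $n_k := \#\{i : w_i = k\}$, subject to $\sum_k k\, n_k = W$ and $\sum_k n_k = n$) to obtain a bound of the form $W \sum_k (n_k/k)\, e^{-kn/W}$; convexity in $(n_k)$ together with the constraints then shows that the worst case is $\vec{w} = (1,1,\ldots,1)$, where the full sum evaluates to $(1+o(1))\,W^2$ by R\'enyi's formula, matching the desired bound. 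The main obstacle is making this comparison quantitative and uniform in $\vec{w}$, so that $\mathbf{E}_\mu[1/(p_S(1-p_S))] \leq (1+\eps)\,W^2/(2(n-1))$ holds for all $W \geq W_0(\eps)$ irrespective of the profile.
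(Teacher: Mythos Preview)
Your reduction is correct and closely parallels the paper: the generalized Cayley formula, the decomposition of $g_n$, and the pendant-subtree probability $\pr(P_S)=p_S^{|S|-1}(1-p_S)^{n-|S|-1}$ are exactly the ingredients the paper uses (the last is its Lemma~\ref{lem.pend}, obtained via Pr\"ufer codes). Your reformulation as $\mathbf{E}_\mu[1/(p_S(1-p_S))]$ is a clean repackaging of the same sum the paper bounds in~(\ref{claim.half}).

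The gap is in the final estimate. Your sketch handles only $|S|=1$: the bound ``$W\sum_k (n_k/k)\,e^{-kn/W}$'' with $n_k=\#\{i:w_i=k\}$ accounts just for singletons, and in the all-ones case this equals roughly $W^2/e$, not the full $(1+o(1))W^2$ that the sum actually takes there. Subsets with $|S|\geq 2$ contribute the remaining $(1-1/e)$ fraction, and your convexity argument does not touch them. Nor is there an extremal principle reducing a general profile to the all-ones profile once all subset sizes are in play: the quantity to be bounded is not linear or obviously convex in any useful parametrization of~$\orr{w}$, so ``worst case is all ones'' is an assertion, not an argument.

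The paper closes this with a generating-function device that you are missing. It writes $\sum_{I:w(I)=k} x^{|I|}$ and overcounts by allowing repeated indices, turning the sum into $\sum_{i\geq 1}\frac{(nx)^i}{i!}\,p_i(k)$, where $p_i(k)=\pr(Z_i=k)$ for $Z_i$ a sum of $i$ iid copies of a uniformly chosen weight. After inserting $x=k/W$ and the exponential bound from~(\ref{eqn.pend2}), swapping the $i$- and $k$-sums and using the pointwise maximum $k^i e^{-\alpha k}\leq (i/(\alpha e))^i$ makes the weight profile disappear entirely (the $p_i(k)$ sum to~$1$ in~$k$), leaving exactly $\sum_{i\geq 1} i^{i-2}/(i!\,e^i)=\tfrac12$. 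This step --- replacing distinct-index subset sums by iid sums and then maximizing $x^i e^{-\alpha x}$ over~$x$ --- is the missing idea, and some analogue of it is needed to treat all $|S|$ uniformly.
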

In Section \ref{sec:uses_lemma} we explain how to use Lemma \ref{lem:ratio_1_2} to prove Claim \ref{cla:impliesmain};
in Section \ref{sec:lem_proof} we prove Lemma \ref{lem:ratio_1_2}.
%
%
%

%
%
\section{Proof of Claim \ref{cla:impliesmain} assuming Lemma \ref{lem:ratio_1_2}}\label{sec:uses_lemma}
%
%
    The proof of Claim \ref{cla:impliesmain} proceeds somewhat differently
    depending on the value of the ratio of $n$ and $W$.
    When $W$ is much larger than $n$, the proof is rather straightforward, and in fact
    does not require Lemma \ref{lem:ratio_1_2} at all, but rather Lemma \ref{lem:smalln2.5} 
    below. In both cases, however, we 
    compare the probability masses of $\scr{F}_{n,i}$ and of $\scr{F}_{n,i+1}$ 
    by double-counting edge-weights in a bipartite graph.
    
      Given a graph $G$, let $c(G)$ be the set of connected components of $G$.
      Given a forest $F \in \scr{F}_n$ and $T\in c(F)$,
      let $w(T)=\sum_{i \in V(T)}w_i$.
      Consider forests $F,F' \in \scr{F}_n$ such that $F'$ can be obtained from $F$ by the addition
      of an edge $e$.
      Writing $T\neq T' \in c(F)$ as shorthand for $\{\{T,T'\}\subseteq c(F):T\neq T'\}$,
      we let
      \begin{equation}
        \varphi(F',F) = \frac{\mass{F'}}{\sum_{T\neq T' \in c(F)} w(T)w(T')}.\label{eq:phi_def}
      \end{equation}
      For all other pairs $F,F'$, we let $\varphi(F',F)=0$.
      We use $\varphi$ in a standard double-counting argument.
      We will use the next preliminary lemma also in Section~\ref{sec:lem_proof}.
\begin{lemma}\label{lem:mass_eq}
   For all positive integers $W$ and all positive integer weight vectors
   $\orr{w}=(w_1,\ldots,w_n)$ with $\sum_{j=1}^n w_j=W$, and for each $i=1,\ldots,n-1$
    \begin{equation} \label{eq:sumflow}
     		\sum_{F'\in\scr{F}_{n,i}}\sum_{F\in\scr{F}_{n,i+1}}\varphi(F',F) =
     		\sum_{F\in \scr{F}_{n,i+1}}\mass{F} = K\cdot\p{{\bf F}\in \scr{F}_{n,i+1}}.
    \end{equation}
      \end{lemma}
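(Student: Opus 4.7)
The plan is a direct double-counting argument. The second equality is immediate from the definition of $\mathbf{F}$, since $\p{\mathbf{F}=F}=\mass{F}/K$ and summing over $F\in\scr{F}_{n,i+1}$ gives $\p{\mathbf{F}\in\scr{F}_{n,i+1}} = K^{-1}\sum_{F\in\scr{F}_{n,i+1}}\mass{F}$. So the real work is establishing the first equality.

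My plan is to swap the order of summation and sum first over $F'$ with $F\in\scr{F}_{n,i+1}$ fixed. By the definition of $\varphi$, only those $F'$ obtained from $F$ by adding a single edge contribute; any such $F'$ is automatically a forest on the same vertex set with exactly $i$ components, hence lies in $\scr{F}_{n,i}$. These $F'$ are in bijection with pairs $\{u,v\}$ such that $u$ and $v$ belong to distinct components of $F$.

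The key observation is a multiplicative formula for $\mass{F'}$. If $F' = F + \{u,v\}$, then $d_{F'}(u)=d_F(u)+1$, $d_{F'}(v)=d_F(v)+1$, and all other degrees are unchanged, so $\mass{F'} = w_u w_v \cdot \mass{F}$. Summing over all edges between distinct components of $F$,
\[
\sum_{F'\in\scr{F}_{n,i}} \mass{F'} \cdot \mathbf{1}_{[F'-e=F\text{ for some }e]} = \mass{F} \sum_{T\neq T'\in c(F)} \sum_{u\in V(T),v\in V(T')} w_u w_v = \mass{F}\sum_{T\neq T'\in c(F)} w(T)w(T'),
\]
where we have used that the pairs of vertices in two distinct components $T\neq T'$ factor through the product $w(T)w(T')$.

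Dividing by the denominator in the definition~\eqref{eq:phi_def} of $\varphi(F',F)$, the sum telescopes neatly: $\sum_{F'\in\scr{F}_{n,i}} \varphi(F',F) = \mass{F}$ for every $F\in\scr{F}_{n,i+1}$. Summing over $F\in\scr{F}_{n,i+1}$ then yields the first equality. There is no genuine obstacle here — the only point requiring care is to verify that the unordered pair count over $\{u,v\}$ with $u\in V(T),v\in V(T')$ gives exactly $w(T)w(T')$ (which it does because the two components are distinct, so each such pair is counted once).
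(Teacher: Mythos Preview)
Your proof is correct and follows essentially the same approach as the paper: fix $F\in\scr{F}_{n,i+1}$, use the identity $\mass{F+uv}=w_u w_v\,\mass{F}$, observe that summing $w_u w_v$ over all cross-edges factors as $\sum_{T\neq T'\in c(F)} w(T)w(T')$, and then cancel against the denominator in $\varphi$ to obtain $\sum_{F'}\varphi(F',F)=\mass{F}$. The paper's argument is identical in substance.
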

\begin{proof}
  Given $i\in\{1,\ldots,n-1\}$ and $F \in \scr{F}_{n,i+1}$,
      if $F'\in \scr{F}_{n,i}$ is obtained from $F$ by the addition of edge $uv$,
      then $\mass{F'}=\mass{F}\cdot w_u \cdot w_v$. We thus have
  \begin{eqnarray*} 
    && \sum_{F' \in \scr{F}_{n,i}} \varphi(F',F)\\
    & = & \left(\sum_{T\neq T' \in c(F)} \sum_{u\in V(T),v\in V(T')} \mass{F}\cdot w_u\cdot w_v \right)
       \cdot \left(\frac{1} {\sum_{T\neq T' \in c(F)}w(T)w(T')} \right)\\
    & = & \frac{\mass{F}}{\sum_{T\neq T' \in c(F)}w(T)w(T')}
       \cdot \left(\sum_{T\neq T' \in c(F)} \sum_{u \in V(T),v\in V(T')} w_u \cdot w_v \right)
                                                 \;\; = \;\; \mass{F}.
      \end{eqnarray*}
    The equation~(\ref{eq:sumflow}) now follows on summing over $F$.

\end{proof}
%
\begin{lemma}\label{lem:smalln2.5}
        For all positive integers $W$ and all positive integer weight vectors
        $\orr{w}=(w_1,\ldots,w_n)$ with $\sum_{j=1}^n w_j=W$, and for each $i=1,\ldots,n-1$
   \begin{equation}\label{eq:smalln2.5}
         \p{{\bf F} \in \scr{F}_{n,i+1}} \leq \frac{\p{{\bf F} \in \scr{F}_{n,i}}(n/W)}{i},
   \end{equation}
\end{lemma}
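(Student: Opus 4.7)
The plan is to invoke Lemma~\ref{lem:mass_eq} and reorganize the double sum by fixing $F' \in \scr{F}_{n,i}$. For such an $F'$, the forests $F \in \scr{F}_{n,i+1}$ contributing to the sum are precisely those of the form $F'-e$ with $e \in E(F')$, and $\varphi(F', F'-e) = \mass{F'}/Z(F'-e)$, where $Z(F'-e) := \sum_{T \neq T' \in c(F'-e)} w(T) w(T')$. Lemma~\ref{lem:mass_eq} therefore reads
\[
  K \cdot \p{\mathbf{F} \in \scr{F}_{n,i+1}} \;=\; \sum_{F' \in \scr{F}_{n,i}} \mass{F'} \sum_{e \in E(F')} \frac{1}{Z(F'-e)},
\]
so (\ref{eq:smalln2.5}) will follow as soon as I prove the pointwise bound $\sum_{e \in E(F')} 1/Z(F'-e) \leq n/(Wi)$ for every $F' \in \scr{F}_{n,i}$.

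To obtain this pointwise bound, I would first derive a uniform lower bound on $Z(F'-e)$ depending only on $n$, $i$, and $W$. Squaring the identity $\sum_{A \in c(F'-e)} w(A) = W$ yields $2\,Z(F'-e) = W^2 - \sum_A w(A)^2$, so the task reduces to upper-bounding $\sum_A w(A)^2$. The forest $F'-e$ has exactly $i+1$ components with positive integer weights summing to $W$, and a short majorization argument (shifting one unit of mass from a smaller to a larger component strictly increases $\sum w(A)^2$) shows that the sum of squares is maximized at the extreme configuration $(W-i, 1, \ldots, 1)$. This gives $\sum_A w(A)^2 \leq (W-i)^2 + i$ and hence
\[
  Z(F'-e) \;\geq\; \frac{i(2W - i - 1)}{2}.
\]

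Since $|E(F')| = n-i$, these two displays combine to $\sum_{e \in E(F')} 1/Z(F'-e) \leq 2(n-i)/[i(2W-i-1)]$. It remains to check $2(n-i)/[i(2W-i-1)] \leq n/(Wi)$; clearing denominators, this is equivalent to $2Wi \geq n(i+1)$, i.e.\ $W \geq n(i+1)/(2i)$. Both $(i+1)/(2i) \leq 1$ for $i \geq 1$ and $W \geq n$ (since every $w_j$ is a positive integer) hold automatically, and substituting back finishes the proof.

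The main conceptual obstacle is choosing the right lower bound on $Z(F'-e)$. The natural decomposition $Z(F'-e) = w(T_e)w(T'_e) + Y(F')$, which isolates the tree of $F'$ containing $e$, is too lossy to give the correct dependence on $i$ when one tree of $F'$ carries nearly all of the weight. Working instead with the full $(i+1)$-component decomposition of $F'-e$ and exploiting the integrality of the $w_j$ via the sum-of-squares bound is what delivers the inequality cleanly, with equality essentially in the all-ones case $W = n$.
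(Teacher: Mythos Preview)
Your proof is correct and follows essentially the same route as the paper: invoke Lemma~\ref{lem:mass_eq}, regroup by $F'\in\scr{F}_{n,i}$, lower-bound $Z(F'-e)$ via the extremal configuration $(W-i,1,\ldots,1)$, multiply by $|E(F')|=n-i$, and finish using $W\geq n$. The only cosmetic difference is that the paper discards the $\binom{i}{2}$ term immediately to get $Z(F'-e)\geq i(W-i)$, so the final inequality becomes simply $(n-i)/(W-i)\leq n/W$; you keep the sharper bound $Z(F'-e)\geq i(2W-i-1)/2$ (which equals $i(W-i)+\binom{i}{2}$) and do a touch more algebra, but the argument is the same.
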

\begin{proof} 
     Fix $i$ with $1 \leq i \leq n-1$.
        By the definition of $\varphi$, for all $F'\in\scr{F}_{n,i}$ we have
        \begin{equation}  \label{eq:kplus1_to_k}
            \sum_{F\in\scr{F}_{n,i+1}}\varphi(F',F) = \mass{F'}
                \cdot \sum_{e\in E(F')}\frac{1}{\sum_{T\neq T' \in c(F'-e)}w(T)w(T')}.
        \end{equation}
        It is well known that for any set of positive integers $a_1,\ldots,a_{i+1}$ with $\sum_{j=1}^{i+1}a_j=W$,
        \[ 
            \sum_{1 \leq j < k \leq i+1} a_ja_k \geq i(W-i) + {i \choose 2}.
        \] 
    [To see this, if $a_1 \geq a_2 \geq 2$
    then let $a'_1=a_1 +1$, $a'_2=a_2 -1$ and $a'_j=a_j$ for each $j \geq 3$.
    Then with sums as above, $\sum_{j<k} a'_j a'_k - \sum_{j<k} a_j a_k = a'_1a'_2 - a_1 a_2 = -a_1+a_2 -1 <0$.
    Hence the sum is minimised when there are $i$ entries 1 and one entry $W-i$.]
        It follows that, for any $F' \in \scr{F}_{n,i}$ and any $e \in E(F')$, we have
        \begin{equation}\label{eq:smalln2}
            \sum_{T\neq T'\in c(F'-e)} w(T)w(T') \geq i(W-i) + {i \choose 2} \geq i(W-i).
        \end{equation}
    Since, if $F' \in \scr{F}_{n,i}$
    then $F'$ has exactly $n-i$ edges, it follows from (\ref{eq:kplus1_to_k}) and (\ref{eq:smalln2}) that for all
    $F' \in \scr{F}_{n,i}$
    \[ \sum_{F\in\scr{F}_{n,i+1}}\varphi(F',F)
      \leq \mass{F'}\cdot (n-i) \cdot \frac{1}{i(W-i)}
      \leq \mass{F'}\cdot \frac{1}{i}\cdot\frac{n}{W}, \]
    so
    \begin{eqnarray*}
       \sum_{F' \in \scr{F}_{n,i}}\sum_{F\in\scr{F}_{n,i+1}}\varphi(F',F) & \leq & 
        \sum_{F' \in \scr{F}_{n,i}}\mass{F'}\cdot \frac{1}{i}\cdot\frac{n}{W} \\
       &  =   & K\cdot\p{{\bf F}\in \scr{F}_{n,i}}\cdot \frac{1}{i}\cdot\frac{n}{W}, 
    \end{eqnarray*}
    and (\ref{eq:smalln2.5}) follows by combining this last result and Lemma~\ref{lem:mass_eq}.     
\end{proof}
    %
    %
    %
  %
  The case $n/W \leq \frac12$ of Lemma~\ref{lem:ratio_1_2} follows immediately from the above lemma;
  and indeed in this case we may complete the proof directly without using the next two lemmas
  -- see the last sentence of this section. 
%
  To explain why Lemma \ref{lem:ratio_1_2} implies Claim \ref{cla:impliesmain} when $n$ is not much smaller than $W$,
  it turns out to be useful to prove a slightly more general implication.
  
  For each finite non-empty set $V$ of positive integers,
  let $\scr{G}(V)$  be the set of all graphs on the vertex set $V$,
  and let $\scr{G}^k(V)$ be the set of all graphs in $\scr{G}(V)$ with exactly
  $k$ components.
  Also, write $\scr{G}_n$ for $\scr{G}(\{1,\ldots,n\})$, and $\scr{G}_n^k$
  for $\scr{G}^k(\{1,\ldots,n\})$.
  For each positive integer $n$, let $\mu_n$ be a measure on the set of all graphs
  with vertex set a subset of $\{1,\ldots,n\}$,
  which is multiplicative on components
  (that is, if $G$ has components $H_1,\ldots,H_k$, then
  $\mu_n(G)=\prod_{i=1}^k \mu_n(H_i)$).
  \begin{lemma}\label{lem:connect_1}
  Suppose there exist $x > 0$ and integers $n \geq m_0 \geq 1$ such that
  \begin{equation}\label{eq:connect_1_a}
    \mu_n(\scr{G}^2(V)) \leq x \mu_n(\scr{G}^1(V))\qquad \mbox{for all }
    V \subseteq \{1,\ldots,n\} \mbox{ with } |V| \geq m_0.
  \end{equation}
  Let $k$ be a positive integer and suppose that $n \geq km_0.$ Then
  \begin{equation}
    \label{eq:connect_1}
    \mu_n(\scr{G}_n^{k+1})\leq \frac{x}{k}\mu_n(\scr{G}_n^k).
  \end{equation}
  \end{lemma}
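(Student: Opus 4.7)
The plan is to prove the inequality by a weighted double-counting between partitions of $\{1,\ldots,n\}$ into $k+1$ nonempty parts and those into $k$ nonempty parts, arranged so that the hypothesis $\mu_n(\scr{G}^2(V))\leq x\,\mu_n(\scr{G}^1(V))$ need only be invoked on a canonical ``large'' block of each $k$-partition, which pigeonhole forces to have size at least $m_0$.

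By multiplicativity, $\mu_n(\scr{G}^k(V))=\sum_{\mathcal{Q}\in\pi_k(V)}\mu_n(\mathcal{Q})$, where $\pi_k(V)$ denotes the set of unordered partitions of $V$ into $k$ nonempty blocks and $\mu_n(\mathcal{Q}):=\prod_{W\in\mathcal{Q}}\mu_n(\scr{G}^1(W))$. For each $\mathcal{Q}\in\pi_k(\{1,\ldots,n\})$ I designate a canonical block $W^*(\mathcal{Q})$, namely a largest block of $\mathcal{Q}$ with ties broken, say, by containing the smallest label. Since the $k$ blocks of $\mathcal{Q}$ sum to $n\geq km_0$, pigeonhole gives $|W^*(\mathcal{Q})|\geq n/k\geq m_0$, so the hypothesis applies to $W^*(\mathcal{Q})$.

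For $\mathcal{P}\in\pi_{k+1}(\{1,\ldots,n\})$ I call a pair of blocks $\{V_i,V_j\}$ of $\mathcal{P}$ \emph{good} if $V_i\cup V_j=W^*(\mathcal{Q})$, where $\mathcal{Q}=(\mathcal{P}\setminus\{V_i,V_j\})\cup\{V_i\cup V_j\}$ is the merged $k$-partition. The crucial observation is that every $\mathcal{P}$ has at least $k$ good pairs: letting $V_{\mathrm{max}}$ be any largest block of $\mathcal{P}$, each of the $k$ pairs $\{V_{\mathrm{max}},V_j\}$ with $V_j\neq V_{\mathrm{max}}$ is good, because $|V_{\mathrm{max}}\cup V_j|=|V_{\mathrm{max}}|+|V_j|$ strictly exceeds $|V_l|$ for every other block $V_l$ of $\mathcal{P}$, making $V_{\mathrm{max}}\cup V_j$ the unique largest block of $\mathcal{Q}$ regardless of the tie-breaker.

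The map $(\mathcal{P},\{V_i,V_j\}\text{ good})\mapsto (\mathcal{Q},\{V_i,V_j\})$ is a weight-preserving bijection onto the set of pairs $(\mathcal{Q},\{A,B\})$ with $\{A,B\}$ an unordered $2$-partition of $W^*(\mathcal{Q})$, so summing yields
\[
\sum_{\mathcal{P}\in\pi_{k+1}(\{1,\ldots,n\})} N(\mathcal{P})\,\mu_n(\mathcal{P}) \;=\; \sum_{\mathcal{Q}\in\pi_k(\{1,\ldots,n\})} \mu_n\!\bigl(\scr{G}^2(W^*(\mathcal{Q}))\bigr) \!\!\!\prod_{V\in\mathcal{Q},\,V\neq W^*(\mathcal{Q})}\!\!\! \mu_n(\scr{G}^1(V)),
\]
where $N(\mathcal{P})$ is the number of good pairs of $\mathcal{P}$. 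Using $N(\mathcal{P})\geq k$ on the left and the hypothesis $\mu_n(\scr{G}^2(W^*(\mathcal{Q})))\leq x\,\mu_n(\scr{G}^1(W^*(\mathcal{Q})))$ on the right will then yield $k\,\mu_n(\scr{G}_n^{k+1})\leq x\,\mu_n(\scr{G}_n^k)$, as required. I expect the main care to go into verifying the bijection and the $N(\mathcal{P})\geq k$ estimate, but the strict-domination argument via $V_{\mathrm{max}}$ should make both fall out cleanly.
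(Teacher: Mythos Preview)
Your proof is correct and is essentially the same double-counting argument as the paper's, recast in the language of set partitions (via the multiplicativity identity $\mu_n(\scr{G}^k(V))=\sum_{\mathcal{Q}\in\pi_k(V)}\prod_{U\in\mathcal{Q}}\mu_n(\scr{G}^1(U))$) rather than collections of $k-1$ connected components. Where the paper secures uniqueness of the large block by building the strict inequality $|V_{\scr{H}}|>\max_i|V(H_i)|$ into the definition of its index set $\scr{A}$, you achieve the same effect with a tie-breaking rule for $W^*(\mathcal{Q})$; the pigeonhole step, the ``merge with the largest block'' argument giving $k$ good pairs, and the final inequality all match the paper's proof step for step.
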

  \begin{proof}
  Let $\scr{A}$ be the collection of all sets $\{H_1,\ldots,H_{k-1}\}$ of
  $k-1$ connected graphs such that the vertex sets $V(H_i)$ are pairwise disjoint
  subsets of $\{1,\ldots,n\}$ and
  \[
    \max_{1 \leq i \leq k-1} |V(H_i)| < n-\sum_{i=1}^{k-1}|V(H_i)|.
  \]
  Let $\scr{H}=\{H_1,\ldots,H_{k-1}\}\in \scr{A}$,
  let $V_{\scr{H}}=\{1,\ldots,n\}\setminus\left(\bigcup_{i=1}^{k-1}V(H_i)\right)$,
  and note that
  $|V_{\scr{H}}| > \max_{1\leq i \leq k-1} |V(H_i)|$ and $|V_{\scr{H}}| \geq m_0$.
  For $j=k$ and $k+1$, let $\scr{G}_n^j(\scr{H})$ denote the set of all graphs $G$ in $\scr{G}_n^j$ such that
  $H_1,\ldots,H_{k-1}$ are each components of $G$. Then, letting
  $\alpha=\prod_{i=1}^{k-1} \mu_n(H_i)$,
  by the multiplicativity of $\mu_n$ and by (\ref{eq:connect_1_a}) we have
  \begin{equation} \label{eq:connect_1_pr1}
    \mu_n(\scr{G}_n^{k+1}(\scr{H})) = \alpha\cdot\mu_n(\scr{G}^2(V_{\scr{H}}))
    \leq x\alpha\cdot\mu_n(\scr{G}^1(V_{\scr{H}})) = x\cdot\mu_n(\scr{G}_n^k(\scr{H})).
  \end{equation}
Next, consider any graph $G \in \scr{G}_n^{k+1}$, and suppose that $G$ has
components $G_1,\ldots,G_{k+1}$, where
$|V(G_1)|\leq\ldots\leq|V(G_{k+1})|$.
For each set $\scr{H}$ formed by picking any $k-1$ of the
graphs $G_1,\ldots,G_k$, we have $\scr{H} \in \scr{A}$ and $G \in \scr{G}_n^{k+1}(\scr{H})$.
  It follows that
  \begin{equation} \label{eq:connect_1_pr2}
    k\cdot\mu_n(\scr{G}_n^{k+1}) \leq \sum_{\scr{H}\in\scr{A}} \mu_n(\scr{G}_n^{k+1}(\scr{H})).
  \end{equation}
  Applying (\ref{eq:connect_1_pr1}) to bound the right-hand side
  of~(\ref{eq:connect_1_pr2}), we obtain
  \begin{equation} \label{eq:connect_1_pr3}
    k\cdot\mu_n(\scr{G}_n^{k+1}) \leq x\cdot \sum_{\scr{H}\in\scr{A}} \mu_n(\scr{G}_n^k(\scr{H})).
  \end{equation}
Furthermore, the sets $\{\scr{G}^k_n(\scr{H}):\scr{H} \in \scr{A}\}$ are pairwise disjoint subsets of $\scr{G}_n^k$,
so $\sum_{\scr{H}\in\scr{A}} \mu_n(\scr{G}_n^k(\scr{H})) \leq \mu_n(\scr{G}_n^k)$,
which combined with (\ref{eq:connect_1_pr3}) yields that
\[
k\cdot\mu_n(\scr{G}_n^{k+1}) \leq x\cdot \mu_n(\scr{G}_n^k),
\]
  which completes the proof.
\end{proof}
  Lemma \ref{lem:connect_1} allows us to 
  derive bounds on the ratio between $\p{{\bf F} \in \scr{F}_{n,i+1}}$ and
  $\p{{\bf F} \in \scr{F}_{n,i}}$ for $i>1$.
%
\begin{lemma}\label{lem:monotone_wc}
	Suppose that there exist $0 < \gamma < 1$ and $m_0 > 0$,
	such that for any	positive integer weights $\orr{w}=(w_1,\ldots,w_n)$ with  
	$\sum_{k=1}^n w_k \geq m_0$, 
	$\p{{\bf F}_{\orr{w}}\in\scr{F}_{n,2}}\leq \gamma \p{{\bf F}_{\orr{w}}\in\scr{F}_{n,1}}$. 
	Fix any positive integer $j$. 
	Then for $W$ sufficiently large, for all integers $i$ with $1 \leq i \leq j$ and any positive integer weights 
	$\orr{w}=\{w_1,\ldots,w_n\}$ with $\sum_{k=1}^n w_k = W$, 
	\begin{equation}\label{eq:monoton_wc}
		\p{{\bf F} \in \scr{F}_{n,i+1}} \leq \frac{\gamma\p{{\bf F} \in \scr{F}_{n,i}}}{i}.
	\end{equation}
\end{lemma}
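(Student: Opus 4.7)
The plan is to split into two cases according to the ratio $n/W$: when $n$ is small relative to $W$ we appeal directly to Lemma~\ref{lem:smalln2.5}, and when $n$ is comparable to $W$ we apply Lemma~\ref{lem:connect_1} to a suitable measure.

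To set up the application of Lemma~\ref{lem:connect_1}, I would extend the mass function to a measure $\mu_n$ on all graphs with vertex set in $\{1,\ldots,n\}$ by declaring $\mu_n(G)=\prod_{i \in V(G)}w_i^{d_G(i)}$ when $G$ is a forest and $\mu_n(G)=0$ otherwise. This is multiplicative on components: if $G$ has components $H_1,\ldots,H_k$ all of which are trees then $\mu_n(G)=\prod_j \mu_n(H_j)$, and if some $H_j$ contains a cycle then both $\mu_n(G)=0$ and $\prod_j \mu_n(H_j)=0$. For $V\subseteq \{1,\ldots,n\}$, the ratio $\mu_n(\scr{G}^2(V))/\mu_n(\scr{G}^1(V))$ is exactly the ratio of probabilities for a random forest distributed according to the weight vector $(w_i)_{i\in V}$. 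Since the weights are positive integers, $|V|\geq m_0$ implies $\sum_{i\in V}w_i\geq m_0$, so the standing hypothesis of the lemma furnishes precisely condition (\ref{eq:connect_1_a}) with $x=\gamma$.

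Lemma~\ref{lem:connect_1} (taking $k=i$) then gives $\mu_n(\scr{G}_n^{i+1})\leq (\gamma/i)\,\mu_n(\scr{G}_n^i)$ whenever $n\geq im_0$, and dividing by $K=\sum_{F\in\scr{F}_n}\mass{F}$ yields (\ref{eq:monoton_wc}). For $n\geq jm_0$ this handles every $i\in\{1,\ldots,j\}$ simultaneously. In the complementary range $n<jm_0$, Lemma~\ref{lem:smalln2.5} gives $\p{{\bf F}\in \scr{F}_{n,i+1}}\leq (n/W)\p{{\bf F}\in \scr{F}_{n,i}}/i$; choosing $W\geq jm_0/\gamma$ then forces $n/W<\gamma$, which recovers the required bound. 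Hence the threshold $W_0=\lceil jm_0/\gamma\rceil$ suffices.

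I do not anticipate a substantive obstacle. The only real content is the observation that the mass function extends to a component-multiplicative measure on all of $\scr{G}_n$, so Lemma~\ref{lem:connect_1} applies off the shelf; everything else is a routine case split needed to absorb the mismatch between the $|V|\geq m_0$ in Lemma~\ref{lem:connect_1} and the total-weight threshold $\sum w_k\geq m_0$ in the hypothesis of the present lemma.
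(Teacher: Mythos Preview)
Your proof is correct and follows essentially the same approach as the paper: extend \emph{mass} to the component-multiplicative measure $\mu_n$, invoke Lemma~\ref{lem:connect_1} when $n$ is large enough, and fall back on Lemma~\ref{lem:smalln2.5} otherwise. The only cosmetic difference is the direction of the case split---the paper treats $n\leq \gamma W$ first and then observes $n>\gamma W$ forces $n\geq km_0$ once $W\geq km_0/\gamma$, whereas you split on $n\gtrless jm_0$---but the logic and threshold $W_0=\lceil jm_0/\gamma\rceil$ are identical.
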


%
\begin{proof}
  Suppose $\gamma$ and $m_0$ satisfy the hypotheses of the lemma, and fix some positive integer $j$.
  Observe first that, by Lemma \ref{lem:smalln2.5}, the inequality~(\ref{eq:monoton_wc})
  holds if $n \leq \gamma W$. We may thus assume that $n > \gamma W$.

Let $n \geq m_0$ and consider any weights $w_1,\ldots,w_n$.   Define $\mu_n(G)$ for each graph
$G$ with vertex set $V \subseteq \{1,\ldots,n\}$ by setting
$\mu_n(G) = \prod_{i \in V}w_i^{d_G(i)}$ if $G$ is a forest and $\mu_n(G)=0$ otherwise.
Then $\mu_n$ is multiplicative on components, and by the hypotheses of the lemma, for each
$V \subseteq \{1,\ldots,n\}$ with $\sum_{i \in V} w_i \geq m_0$ we have
\[
  \mu_n(\scr{G}^2(V)) \leq \gamma \mu_n(\scr{G}^1(V)).
\]
Now we may use Lemma~\ref{lem:connect_1} to obtain
\[
  \mu_n(\scr{G}^{k+1}_n) \leq \frac{\gamma}{k} \mu_n(\scr{G}^{k}_n)
\]
whenever $n \geq km_0$. Since $n \geq km_0$ whenever $W \geq km_0/\gamma$, Lemma \ref{lem:monotone_wc} follows.
\end{proof}
\begin{proof}[Proof of Claim \ref{cla:impliesmain} assuming Lemma \ref{lem:ratio_1_2}]

Fix $\alpha$ with $0 < \alpha < 1$, and
choose $j$ large enough that $2/j! \leq \alpha/2$.
Let $\eps > 0$ be small enough that
$(1-\alpha/2)/(1+\eps)^{j} \geq 1-\alpha$.
  We apply Lemma \ref{lem:monotone_wc} 
  with $\gamma=(1+\eps)\frac12$ 
(Lemma \ref{lem:ratio_1_2} guarantees that there exists $m_0>0$ such 
that the hypotheses of Lemma \ref{lem:monotone_wc} hold with this choice of $m_0$ and $\gamma$).
It follows that for $W$ large enough,
for all $i$ with $1\leq i \leq j$ we have
\[
  \p{{\bf F} \in \scr{F}_{n,i+1}} \leq (1+\eps)^i\frac{ \p{{\bf F} \in \scr{F}_{n,1}}}{2^i i!}.
\]
Furthermore, writing $\kappa(F)$ for the number of connected components of $F$,
\begin{eqnarray}
  1 &=& \sum_{i=0}^{n-1}\p{{\bf F} \in \scr{F}_{n,i+1}} \nonumber\\
&\leq&
  (1+\eps)^{j}\sum_{i=0}^{j-1}\frac{\p{{\bf F}\in \scr{F}_{n,1}}}{2^i i!}
  + \p{\kappa({\bf F}) \geq j+1}.\label{eq:impliesmain1}
\end{eqnarray}
By Lemma \ref{lem:smalln2.5}, for all $i \geq 1$,
\[
    \p{{\bf F} \in \scr{F}_{n,i+1}}\leq \frac{(n/W)^i}{i!} \leq \frac{1}{i!},
\]
from which it follows that for all $k \geq 1$,
\[
\p{\kappa({\bf F}) \geq k+1} \leq \sum_{i \geq k} \frac{1}{i!} \leq \frac{2}{k!}.
\]
Combining the latter equation with (\ref{eq:impliesmain1}) yields that
\[
1 \leq (1+\eps)^{j}e^{\frac12}\p{{\bf F}\in \scr{F}_{n,1}} + \alpha/2,\nonumber
\]
so
\begin{equation}\label{eq:implies_fcon}
  \p{{\bf F}\in \scr{F}_{n,1}} \geq
  \frac{1-\alpha/2}{(1+\eps)^{j}e^{\frac12}} \geq \frac{1-\alpha}{e^{\frac12}}.
\end{equation}
As $\alpha > 0$ was arbitrary, (\ref{eq:implies_fcon}) implies that
$\p{{\bf F}\mbox{ is connected}}\geq e^{-\frac12+o(1)}$ which combined with Lemma
\ref{eq:hf_con} proves Claim \ref{cla:impliesmain}.
\end{proof}
  A simpler version of the above argument lets us deduce directly from
  inequality~(\ref{eq:smalln2.5}) in Lemma~\ref{lem:smalln2.5}
  the non-asymptotic result, that $\p{{\bf F}\mbox{ is connected}}> e^{-n/W}$
  for all positive integers $W$ and all weight vectors
  $\orr{w}=(w_1,\ldots,w_n)$ with $\sum_{j=1}^n w_j=W$.


\section{Proof of Lemma \ref{lem:ratio_1_2}}
\label{sec:lem_proof}
  As already noted, Lemma \ref{lem:ratio_1_2} follows immediately from 
  Lemma \ref{lem:smalln2.5} in the special case when $n \leq \frac12 W$: here we will prove the full result.
  Let $W$ be a positive integer and consider any positive integer weight vector
  $\orr{w}=(w_1,\ldots,w_n)$ with $\sum_i w_i = W$. 
  We may assume that $n \geq 2$. 
  Given a tree $T$ with vertex set $[n]:= \{1,\ldots,n\}$ and an edge $e \in T$,
  we denote by $s(T,e)$ the smaller weight component of $T-e$,
  or the component of $T$ containing vertex $1$ if the components have equal weights.
  We call the components of $T-e$ {\em pendant subtrees of $T$}.
%
%
For $i=1,\ldots,\lfloor W/2 \rfloor$, denote by $c(T,i)$ the quantity $|\{e \in T:w(s(T,e))=i\}|$.

    Recall that $K=\sum_{F \in \scr{F}_n} \mass{F}$, and 
    let $K' = \sum_{T \in \scr{F}_{n,1}} \mass{T}=K\cdot \p{{\bf F}\in\scr{F}_{n,1}}$. 
    Let ${\bf T}$ be a random tree with vertex set $\{1,\ldots,n\}$ and such that
\begin{equation}
  \label{eq:rand_tree}
  \p{{\bf T}=T} = \frac{\mass{T}}{K'}.
\end{equation}
  
  Our proof of Lemma~\ref{lem:ratio_1_2} starts with the identity in Lemma~\ref{lem:fn2_bound} below,
  which expresses the ratio of $\p{{\bf F}\in \scr{F}_{n,2}}$ to $\p{{\bf F}\in \scr{F}_{n,1}}$
  as a weighted sum of the values $\e{c({\bf T},i)}$.
  We then see that we can generate the random tree ${\bf T}$ in a natural way using Pr\"{u}fer codes
  -- see Lemma~\ref{lem.prufer}.  This lets us obtain a good upper bound on the probability that ${\bf T}$ has a pendant
  subtree with a given set of vertices (inequality~(\ref{eqn.pend2}) in Lemma~\ref{lem.pend}); and using~(\ref{eqn.pend2})
  we can upper bound the weighted sum mentioned above, and thus complete the proof.
   
\begin{lemma}   \label{lem:fn2_bound}
    \begin{equation}
      \label{eq:fn2_bound}
      \p{{\bf F}\in \scr{F}_{n,2}} = \p{{\bf F}\in\scr{F}_{n,1}}\cdot\sum_{i=1}^{\lfloor W/2 \rfloor} \frac{\e{c({\bf T},i)}}{i(W-i)}.
    \end{equation}
\end{lemma}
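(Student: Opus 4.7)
The identity is essentially an unpacking of the double-counting identity of Lemma~\ref{lem:mass_eq} in the special case $i=1$. Applied at $i=1$, that lemma states
\[
  \sum_{T\in\scr{F}_{n,1}}\sum_{F\in\scr{F}_{n,2}} \varphi(T,F) \;=\; K\cdot \p{{\bf F}\in\scr{F}_{n,2}}.
\]
My plan is to compute the left-hand side by first fixing a tree $T$ and evaluating the inner sum, then regrouping the resulting sum over edges of $T$ by the weight of the smaller pendant subtree.

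Fix $T\in\scr{F}_{n,1}$. The only $F\in\scr{F}_{n,2}$ for which $\varphi(T,F)\neq 0$ are those of the form $F=T-e$ for some $e\in T$. For such an $F$, the set $c(F)$ has exactly two elements $C,C'$, with $w(C)+w(C')=W$; by the definition of $s(T,e)$, one of these pendant subtrees has weight $w(s(T,e))$ and the other has weight $W-w(s(T,e))$. Thus the denominator in~(\ref{eq:phi_def}) is the single term $w(s(T,e))(W-w(s(T,e)))$, so
\[
  \varphi(T,T-e) \;=\; \frac{\mass{T}}{w(s(T,e))\bigl(W-w(s(T,e))\bigr)}.
\]
Grouping the sum over edges $e\in T$ by the value $i=w(s(T,e))\in\{1,\ldots,\lfloor W/2\rfloor\}$ (the tie-breaking rule in the definition of $s(T,e)$ ensures that each edge contributes to exactly one value of $i$), this gives
\[
  \sum_{F\in\scr{F}_{n,2}}\varphi(T,F) \;=\; \mass{T}\sum_{i=1}^{\lfloor W/2\rfloor} \frac{c(T,i)}{i(W-i)}.
\]

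Summing over $T\in\scr{F}_{n,1}$ and invoking the definition $K'=\sum_{T\in\scr{F}_{n,1}}\mass{T}=K\cdot\p{{\bf F}\in\scr{F}_{n,1}}$ together with $\p{{\bf T}=T}=\mass{T}/K'$, the sum $\sum_T \mass{T}\cdot c(T,i)$ becomes $K'\cdot\e{c({\bf T},i)}$. Substituting into the identity from Lemma~\ref{lem:mass_eq} and dividing through by $K$ yields~(\ref{eq:fn2_bound}).

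There is no real obstacle here; the argument is pure bookkeeping. The only point requiring slight care is the interplay between the tie-breaking convention in $s(T,e)$ and the range $1\le i\le \lfloor W/2\rfloor$, which is precisely what makes each edge of $T$ contribute to the counting function $c(T,\cdot)$ exactly once.
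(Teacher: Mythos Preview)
Your proposal is correct and follows essentially the same approach as the paper: both apply Lemma~\ref{lem:mass_eq} at $i=1$, evaluate $\varphi(T,T-e)=\mass{T}/\bigl(w(s(T,e))(W-w(s(T,e)))\bigr)$, regroup edges by the value $i=w(s(T,e))$, and then convert the mass-weighted sum into $K'\cdot \e{c({\bf T},i)}$. Your explicit remark about the tie-breaking convention ensuring each edge contributes exactly once is a nice point of care that the paper leaves implicit.
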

\begin{proof}
  By Lemma~\ref{lem:mass_eq} 
  and the definition of the flow $\varphi$ given in (\ref{eq:phi_def}),
\begin{eqnarray*}
   K \cdot \p{{\bf F}\in\scr{F}_{n,2}}
  & = &
  \sum_{F'\in\scr{F}_{n,1}}\sum_{F\in\scr{F}_{n,2}}\varphi(F',F) \nonumber\\
   &=&
  \sum_{T \in \scr{F}_{n,1}} \mass{T}\cdot\sum_{e \in T} \frac{1}{s(T,e)(W-s(T,e))}\nonumber\\
   & = &
  \sum_{i=1}^{\lfloor W/2 \rfloor} \frac{1}{i(W-i)} \sum_{T \in \scr{F}_{n,1}}
   \mass{T}\cdot c(T,i). 
\end{eqnarray*}
    Also, for each $i=1,\ldots,\lfloor W/2\rfloor$,
\begin{eqnarray*}
  \sum_{T \in \scr{F}_{n,1}} \mass{T}\cdot c(T,i)
   & = &
  K\cdot \sum_{T \in \scr{F}_{n,1}} \p{{\bf T}=T}\cdot\p{{\bf F}\in \scr{F}_{n,1}}\cdot c(T,i)\nonumber\\
   & = &
  K\cdot \p{{\bf F} \in \scr{F}_{n,1}}\cdot\e{c({\bf T},i)}. 
\end{eqnarray*}
    Combining these results proves the lemma.
\end{proof}
  Lemma \ref{lem:fn2_bound} allows us to understand the ratio between
  $\p{{\bf F}\in\scr{F}_{n,2}}$ and $\p{{\bf F}\in\scr{F}_{n,1}}$ by studying the values
  $\e{c({\bf T},i)}$ for $1 \leq i \leq \lfloor W/2 \rfloor$.
  %
  Observe that, for any $k_0>0$, for $W \geq 2k_0$,
\[ \sum_{k=k_0}^{\lfloor W/2 \rfloor} \frac{ \e{c({\bf T},k)}}{k(W-k)} \leq \frac{n-1}{k_0(W-k_0)} \leq \frac{2}{k_0}.\]
  Thus it sufficies to show that for any $\eps>0$, for $W$ sufficiently large we have
\begin{equation} \label{claim.half}
 \sum_{k=1}^{k_0} \frac{\e{c({\bf T},k)}}{k(W-k)} \leq (1+\eps) \cdot \frac12.
\end{equation}  


  



Consider a sequence of iid random variables $Z_1,Z_2,\ldots,Z_{n-2}$ with $\pr(Z_1=i)=\frac{w_i}{W}$ for $i=1,\ldots,n$, 
and let ${\bf Z}=(Z_1,\ldots,Z_{n-2})$. 
Given ${\bf z}=(z_1,\ldots,z_{n-2}) \in [n]^{n-2}$ let $n_i({\bf z})= | \{j: z_j=i\}|$ for each $i=1,\ldots,n$.
  We remind the reader that the {\em Pr\"{u}fer code} construction (see for example the book by West~\cite{west}) 
  gives a bijection $\phi: [n]^{n-2} \to \scr{F}_{n,1}$ such that if $\phi({\bf z})=T$ then
$d_T(i)= n_i({\bf z})+1$ for each~$i$. (Recall that $\sum_j d_{T}(j)-1 = n-2$ for any tree $T$ on $[n]$.)

\begin{lemma} \label{lem.prufer}
The random variables $\phi({\bf Z})$ and ${\bf T}$ are identically distributed. 
\end{lemma}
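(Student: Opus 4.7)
The plan is a direct computation using the defining property of the Prüfer code, with a single application of the multinomial theorem to normalize.

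First I would compute $\p{{\bf Z}={\bf z}}$ for an arbitrary word ${\bf z}=(z_1,\ldots,z_{n-2})\in[n]^{n-2}$. By independence,
\[
\p{{\bf Z}={\bf z}}=\prod_{j=1}^{n-2}\frac{w_{z_j}}{W}=\frac{1}{W^{n-2}}\prod_{i=1}^{n}w_i^{n_i({\bf z})}.
\]
Next I would translate this through the bijection $\phi$. Since $\phi({\bf z})=T$ implies $d_T(i)=n_i({\bf z})+1$, we have $n_i({\bf z})=d_T(i)-1$, so
\[
\prod_{i=1}^{n}w_i^{n_i({\bf z})}=\frac{\mass{T}}{\prod_{i=1}^{n}w_i},
\]
and therefore, writing ${\bf z}=\phi^{-1}(T)$,
\[
\p{\phi({\bf Z})=T}=\p{{\bf Z}=\phi^{-1}(T)}=\frac{\mass{T}}{W^{n-2}\prod_{i=1}^{n}w_i}.
\]

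To conclude, I would show that the denominator $W^{n-2}\prod_{i=1}^{n}w_i$ equals $K'$, so that this matches~(\ref{eq:rand_tree}). Summing the previous display over $T\in\scr{F}_{n,1}$ gives
\[
1=\sum_{T\in\scr{F}_{n,1}}\p{\phi({\bf Z})=T}=\frac{1}{W^{n-2}\prod_{i=1}^{n}w_i}\sum_{T\in\scr{F}_{n,1}}\mass{T}=\frac{K'}{W^{n-2}\prod_{i=1}^{n}w_i},
\]
which is the required identity. (Alternatively, one can evaluate $K'$ directly: using the Prüfer bijection, $K'=\prod_i w_i\cdot\sum_{{\bf z}\in[n]^{n-2}}\prod_j w_{z_j}=\prod_i w_i\cdot(w_1+\cdots+w_n)^{n-2}$ by the multinomial theorem.) Hence $\p{\phi({\bf Z})=T}=\mass{T}/K'=\p{{\bf T}=T}$ for every $T$, so $\phi({\bf Z})$ and ${\bf T}$ have the same distribution.

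There is no real obstacle here: the only ingredient beyond routine manipulation is the standard Prüfer-code property $d_T(i)=n_i({\bf z})+1$, which is quoted from~\cite{west}. The lemma is essentially a weighted analogue of the well-known fact that the uniform Prüfer code produces a uniformly random labelled tree.
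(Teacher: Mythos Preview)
Your proof is correct and follows essentially the same route as the paper: compute $\p{{\bf Z}={\bf z}}$ by independence, convert via $n_i({\bf z})=d_T(i)-1$ to get $\p{\phi({\bf Z})=T}=\mass{T}/\bigl(W^{n-2}\prod_i w_i\bigr)$, and then identify the denominator with $K'$ because both sides are probability distributions. The only cosmetic difference is that you spell out the normalization step (summing to~$1$, or the multinomial expansion) more explicitly than the paper does.
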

\begin{proof}
%
Fix any tree $T_0$ on $[n]$.
There is exactly one sequence ${\bf z} \in [n]^{n-2}$ with $\phi({\bf z})=T$, and this sequence must contain 
exactly $d_{T_0}(j)-1$ co-ordinates $j$ for each $j \in [n]$. We thus have 
\[
  \pr(\phi({\bf Z})= T_0) = \pr({\bf Z}={\bf z}) =
 \prod_{j=1}^{n} \left( \frac{w_j}{W} \right)^{d_{T_0}(j)-1}
  = \frac{ \mass{T_0}}{ (\prod_j w_j) \cdot W^{n-2}}. 
\]
  We must then have $K'= (\prod_j w_j) \cdot W^{n-2}$ and, comparing with (\ref{eq:rand_tree}), the result follows. 
\end{proof}
  %

  
  \begin{dfn}
Given $I \subseteq \{1,\ldots,n\}$, let $P_I$ be the event that ${\bf T}$ contains a pendant subtree $T$ with $V(T)=I$, 
and write $w(I) = \sum_{i \in I}w_i$. 
\end{dfn}

\begin{lemma} \label{lem.pend}
  For each $I \subseteq [n]$,
\begin{equation} \label{eqn.pend}
 \pr(P_I)= \left(\frac{w(I)}{W}\right)^{|I|-1} \ \left(1-\frac{w(I)}{W}\right)^{n-|I|-1};
\end{equation}
  and so, for any $\delta>0$ and $w_0>0$, there is a $W_0$ such that
  for each $W \geq W_0$ and each $I \subseteq [n]$ with $w(I) \leq w_0$
\begin{equation} \label{eqn.pend2}
  \pr(P_I) \leq (1+ \delta) \left(\frac{w(I)}{W}\right)^{|I|-1} \ e^{-\frac{nw(I)}{W}}.
\end{equation}
\end{lemma}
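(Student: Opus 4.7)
The plan is to prove (\ref{eqn.pend}) by a direct weighted count of trees having $I$ as the vertex set of a pendant subtree, and then to deduce (\ref{eqn.pend2}) by a short Taylor expansion.

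First I would establish a bijective decomposition: a tree $T$ on $[n]$ has $I$ as the vertex set of a pendant subtree if and only if $T$ contains exactly one edge $e=uv$ with $u\in I$, $v\in I^c:=[n]\setminus I$, in which case $T-e$ is the disjoint union of a tree $T_1$ on $I$ and a tree $T_2$ on $I^c$. Such $T$'s are therefore in bijection with quadruples $(T_1,T_2,u,v)$ of this form via $T=T_1\cup T_2\cup\{uv\}$, and a short degree computation gives
\[
\mass{T}=w_u w_v\cdot\prod_{j\in I}w_j^{d_{T_1}(j)}\cdot\prod_{j\in I^c}w_j^{d_{T_2}(j)}.
\]
Next, the very same Pr\"ufer-code computation that underlies Lemma~\ref{lem.prufer}, applied to an arbitrary vertex set $S\subseteq[n]$ with weights $(w_j)_{j\in S}$, yields the generalised Cayley identity $\sum_{T\text{ tree on }S}\prod_{j\in S}w_j^{d_T(j)}=\bigl(\prod_{j\in S}w_j\bigr)w(S)^{|S|-2}$, valid for all $|S|\geq 1$ with the natural convention $w(S)^{-1}\cdot w(S)=1$ when $|S|=1$. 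Applying this with $S=I$ and $S=I^c$, summing over $(u,v)\in I\times I^c$, and dividing by $K'=\bigl(\prod_j w_j\bigr)W^{n-2}$ (from Lemma~\ref{lem.prufer}) gives
\[
\pr(P_I)=\frac{\prod_j w_j\cdot w(I)^{|I|-1}w(I^c)^{n-|I|-1}}{\prod_j w_j\cdot W^{n-2}}=\Bigl(\frac{w(I)}{W}\Bigr)^{|I|-1}\Bigl(1-\frac{w(I)}{W}\Bigr)^{n-|I|-1},
\]
which is (\ref{eqn.pend}).

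For (\ref{eqn.pend2}), set $p:=w(I)/W$, so $p\leq w_0/W$. Taking logarithms reduces the claim to $(n-|I|-1)\ln(1-p)+np\leq\ln(1+\delta)$. Using $\ln(1-p)=-p-p^2/2+O(p^3)$ for small $p$, the left-hand side equals $(|I|+1)p-\tfrac12(n-|I|-1)p^2+O(np^3)$. Since the weights are positive integers, $|I|\leq w(I)\leq w_0$ and $n\leq W$, so $(|I|+1)p\leq(w_0+1)w_0/W$, $(n-|I|-1)p^2\leq w_0^2/W$, and $np^3\leq w_0^3/W^2$; each tends to $0$ as $W\to\infty$, uniformly over admissible $I$. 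Hence the left-hand side is eventually below $\ln(1+\delta)$ and (\ref{eqn.pend2}) follows.

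The only genuinely creative step is the decomposition $T\leftrightarrow(T_1,T_2,u,v)$ together with the observation that the Pr\"ufer argument of Lemma~\ref{lem.prufer} applies verbatim to any vertex set $S$. The rest is routine; the main bookkeeping point is the boundary behaviour of the weighted-tree sum when $|I|=1$ or $|I^c|=1$, which the convention above handles painlessly.
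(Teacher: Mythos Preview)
Your proof is correct, but the route differs from the paper's in both parts.

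For~(\ref{eqn.pend}), the paper works directly with the Pr\"ufer encoding: after relabelling so that $I=\{1,\ldots,i\}$, it observes that $I$ is the vertex set of a pendant subtree of $\phi(\mathbf z)$ if and only if $z_1,\ldots,z_{i-1}\in I$ and $z_i,\ldots,z_{n-2}\notin I$, which immediately gives the stated probability (and a symmetry remark extends this to arbitrary $I$). Your approach instead factors the weighted tree count through the decomposition $T\leftrightarrow(T_1,T_2,u,v)$ and invokes the weighted Cayley identity on each side. Both are clean; the paper's argument is shorter since it never re-derives a Cayley-type sum, while yours is arguably more transparent because it avoids the combinatorial claim about how pendant subtrees sit inside Pr\"ufer sequences.

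For~(\ref{eqn.pend2}), the paper uses only the one-line estimate $1-x\le e^{-x}$, obtaining $\bigl(1-\tfrac{w(I)}{W}\bigr)^{n-|I|-1}\le e^{-nw(I)/W}\cdot e^{(|I|+1)w(I)/W}$, and then notes that the second exponential is at most $1+\delta$ once $W$ is large (since $|I|\le w(I)\le w_0$). Your Taylor expansion reaches the same conclusion but with more bookkeeping than necessary; the $-\tfrac12(n-|I|-1)p^2$ term in particular only helps and can be dropped.
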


\begin{proof}
  Let $I= \{1,\ldots,i\} \subseteq [n]$. According to the Prufer bijection, $I$ is the set of vertices in a pendant subtree of $\phi({\bf z})$
  where ${\bf z}=(z_1,\ldots,z_{n-2})$, if and only if each of $z_1,\ldots,z_{i-1}$ is in $I$
  and none of $z_i,\ldots,z_{n-2}$ is in $I$.  Hence the probability that $I$ is the set of vertices in a pendant
  subtree of $\phi({\bf Z})$ equals the right side of~(\ref{eqn.pend}). 
  This probability depends on the weights of the vertices in $I$ but does not depends on their labels,
  so~(\ref{eqn.pend}) must hold for each set $I \subseteq [n]$.
  Further, the inequality $1-x \leq e^{-x}$ gives  
\[   \left(1-\frac{w(I)}{W}\right)^{n-|I|-1} \leq  e^{-\frac{nw(I)}{W}} \cdot e^{\frac{w(I)(|I|+1)}{W}},\]
  and so~(\ref{eqn.pend}) yields~(\ref{eqn.pend2}).
\end{proof}


  Now a little manipulation of generating functions will allow us to use~(\ref{eqn.pend2}) to upper bound the sum on the right side of~(\ref{eq:fn2_bound}), and thus complete the proof.
  
  Let $F(y)= \sum_{i=1}^{n} y^{w_i}$ and let $G(y)=\frac1{n} F(y)$.
  Next, let $X$ be uniformly distributed on $[n]$, and let $Y=w_X$, so that 
  $Y$ has probability generating function $G(y)$.
  Let $Y_1,Y_2,\ldots$ be independent, each distributed like $Y$, and for $i \ge 1$ let $Z_i=Y_1+\cdots+Y_i$.
  Then for each positive integer $k$
\[ [y^k] G(y)^i = \pr(Z_i=k) := p_i(k). \]
  Thus
\begin{eqnarray*}
  \sum_{I\subseteq [n], w(I)=k} x^{|I|}
  &=&
  [y^k] \sum_{I\subseteq [n]} x^{|I|} y^{w(I)}
  \;\; = \;\; [y^k] \sum_{i \geq 1} x^i \sum_{I\subseteq [n], |I|= i} y^{w(I)}\\
   &=&
  [y^k] \sum_{i \geq 1} \frac{x^i}{i!} \sum_{x_1,\ldots,x_i \in [n] \mbox{{\small distinct}}} y^{\sum_j w_{x_j}}\\
   & \leq &
  [y^k] \sum_{i \geq 1} \frac{x^i}{i!} \sum_{x_1,\ldots,x_i \in [n]} y^{\sum_j w_{x_j}}
   \;\; = \;\;
  [y^k] \sum_{i \geq 1} \frac{x^i}{i!} \ F(y)^i \\
  & = & \sum_{i \geq 1} \frac{(nx)^i}{i!} [y^k] G(y)^i
  \;\; = \;\;
  \sum_{i \geq 1} \frac{(nx)^i}{i!} \ p_i(k).
\end{eqnarray*}
  %
  (Since $p_i(k)=0$ for $i>k$ we could replace $i \geq 1$ in the sum above by $1 \leq i \leq k$.)
  
  We will use this result with $x= \frac{k}{W}$.  Let $\alpha = \frac{n}{W}$,
  so that $nx=\alpha k$.  
  By the last inequality and~(\ref{eqn.pend2}) in Lemma~\ref{lem.pend}, 
  for each $\delta>0$ and each $k$, for $W$ sufficiently large
  (both for~(\ref{eqn.pend2}) and so that $\left(1-\frac{k}{W}\right)^{-1} \leq (1+\delta)$)
  we have
\begin{eqnarray*}
  \frac{\e{c({\bf T},k)}}{k(W-k)}
  &=& \frac{1}{k(W-k)}  \sum_{I \subseteq [n], w(I)=k} \pr(P_I)\\
  & \leq & (1+\delta) \left(1-\frac{k}{W}\right)^{-1} \ k^{-2} e^{-\frac{nk}{W}} \sum_{I \subseteq [n], w(I)=k} \left(\frac{k}{W}\right)^{|I|}\\
  & \leq & (1+\delta)^2 k^{-2} e^{-\alpha k}  \sum_{i \geq 1} \frac{(\alpha k)^i}{i!} p_i(k).
\end{eqnarray*}
  Now fix $\eps > 0$ and $k_0 > 0$. By the preceding inequality, for $W$ sufficiently large,
  for all $k \le k_0$ we have 
\[  (1+\eps)^{-1} \cdot \frac{\e{c({\bf T},k)}}{k(W-k)} \leq k^{-2} e^{-\alpha k} 
   \sum_{i \geq 1} \frac{(\alpha k)^i}{i!} p_i(k),\]
 %
 and thus
\begin{eqnarray*}
  (1+\eps)^{-1} \cdot \sum_{k = 1}^{k_0} \frac{\e{c({\bf T},k)}}{k(W-k)}
  & \leq & \sum_{k = 1}^{k_0} k^{-2} e^{-\alpha k}  \sum_{i \geq 1} \frac{(\alpha k)^i}{i!} p_i(k) \\ 
  & = &  \sum_{1 \leq i \leq k_0} \frac{\alpha^i}{i!} \sum_{k=i}^{k_0} k^{i-2} e^{-\alpha k} p_i(k)\\
  & \leq & \sum_{i \geq 1} \frac{\alpha^i}{i!} i^{-2} \sum_{k\geq i} k^{i} e^{-\alpha k} p_i(k) \\
  &=&  \sum_{i \geq 1} \frac{\alpha^i}{i!}i^{-2} \e [Z_i^{i} e^{-\alpha Z_i}]. 
\end{eqnarray*}
  For $i=1,2,\ldots$ let $f_i(x)= x^i e^{-\alpha x}$ for $x>0$.
  Then $f'(x) = x^{i-1} e^{-\alpha x}(i - \alpha x)$.  Thus $f_i(x)$ takes its maximum value at $x=i/\alpha$,
  and its maximum value is $\left( \frac{i}{\alpha e} \right)^i$.  Hence for $W$ sufficiently large
\begin{eqnarray*}
  (1+\eps)^{-1} \cdot \sum_{k = 1}^{k_0} \frac{\e{c({\bf T},k)}}{k(W-k)}
   & \leq &  
  \sum_{i \geq 1} \frac{\alpha^i}{i!} i^{-2} \e [ f_i (Z_i) ]\\
   & \leq &  
  \sum_{i \geq 1} \frac{\alpha^i}{i!} i^{-2} (\frac{i}{\alpha e})^i \;\;
  = \;\;  
   \sum_{i \geq 1} \frac{i^{i-2}}{i! e^i} \;\; = \;\;  \frac12. 
\end{eqnarray*}
The final equality above can be found, for example, in \citep[page 109]{bollobas01random}. 
 We have now established~(\ref{claim.half}); this completes the proof of Lemma~\ref{lem:ratio_1_2}, and thus of
 Theorem~\ref{thm:main}.


\section{Concluding remarks}

  We have proved the conjecture from McDiarmid, Steger and Welsh~\cite{mcdiarmid06random} that the class of forests is asymptotically 
  the worst possible example of a bridge-addable graph class, from the point of view of connectivity,
  but only in the special case when the graph class is bridge-alterable, and so also when the class is monotone
  as well as bridge-addable. 
  Recently, a substantial amount of work has gone into counting the number of random graphs in a variety of graph
  classes that are bridge-addable; in some cases, this has also led to precise estimates on the probability of connectedness.
  \citet{gn09a} have shown that for a uniformly random planar graph, the probability of connectedness is approximately 0.963253  
  (correct to 6 decimal places, as are all the figures in this paragraph).
  The same results hold for the class of graphs embeddable on any fixed surface, Bender and Gao~\citep{bg11},
  Chapuy, Fusy, Gim\'enez, Mohar and Noy~\cite{cfgmn11}. 
  Similarly, Bodirsky, Gim\'enez, Kang and Noy~\cite{bodirsky05number,bgkn07}
  have shown that for series-parallel graphs and outerplanar graphs, the probabilities 
  of connectedness are approximately 0.889038 and 0.862082, respectively;
  and Gerke, Gim\'enez, Noy and Wei{\ss}l~\cite{ggnw07} have shown that 
  for random $K_{3,3}$-minor-free graphs, this probability is approximately 0.963262.
  For further related results 
  see \cite{bnw09,fp11,gimenez07graph,gn09b}.  
  All these results 
  are for monotone graph classes.

The original conjecture from~\cite{mcdiarmid06random} still is open.
We venture the following, stronger conjecture.
\begin{conj} \label{conj:precise}
 For any $n$ and any non-empty bridge-addable set $\scr{A}$ of graphs
on $\{1,\ldots,n\}$, if
 $\bf{G}$ is a uniformly random element of $\scr{A}$ and
 $\bf{F}$ is a uniformly random element of $\scr{F}_n$, then
\begin{equation}
   \label{eq:precise}
   \p{\bf{G}\mbox{ is connected}} \geq \p{\bf{F}\mbox{ is connected}}.
\end{equation}
\end{conj}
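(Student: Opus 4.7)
The plan is to extend the paper's equivalence-class strategy so as to simultaneously drop the bridge-alterable hypothesis and sharpen the asymptotic bound into a pointwise one. As a first step, partition $\scr{A}$ by bridgeless-kernel equivalence: $\scr{A} = \bigsqcup_{H} \scr{A}_H$ where $\scr{A}_H = \scr{A}\cap [H]$ and $H$ ranges over bridgeless graphs appearing as $b(G)$ for some $G\in\scr{A}$. Although $\scr{A}$ need not be bridge-alterable, bridge-addability alone guarantees that each $\scr{A}_H$ is \emph{upward-closed} inside $[H]$ under the operation of adjoining bridges with arbitrary endpoints in distinct components. Since $\p{\bf{G}\mbox{ is connected}}$ is a weighted average of the conditional probabilities $\p{\bf{G}\mbox{ is connected}\mid \bf{G}\in\scr{A}_H}$, it suffices to prove a per-class version of the inequality and then average over $H$.

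Second, to analyse a single class, adapt the component-contraction map $f\colon [H]\to\scr{F}_{n_H}$ of Lemma~\ref{eq:hf_con}. An element of $[H]$ is equivalent to a pair $(F,\phi)$ where $F\in\scr{F}_{n_H}$ is a forest on the component quotient and $\phi$ specifies, for each edge $ij\in E(F)$, an endpoint in $C_i$ and an endpoint in $C_j$; the graph is connected iff $F$ is a spanning tree. An upward-closed $\scr{A}_H\subseteq[H]$ pulls back to a family of pairs $(F,\phi)$ that is closed under adjoining new forest edges and under all endpoint choices for those new edges. The goal becomes a purely combinatorial claim: for any weight vector $\orr{w}$ and any such upward-closed family of enhanced forests, the fraction of pairs whose underlying $F$ is a spanning tree is at least $\p{\bf{F}\mbox{ is connected}}$ (with $\bf{F}$ uniform on $\scr{F}_n$).

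The main obstacle is this final combinatorial claim, and I expect this is where genuinely new ideas are needed. Even in the unrestricted case of the full family $\scr{F}_{n_H}^{\orr{w}}$, Lemma~\ref{lem:ratio_1_2} gives only the asymptotic $(1+\eps)/2$ bound on $\p{\bf{F}_{\orr{w}}\in\scr{F}_{n,2}}/\p{\bf{F}_{\orr{w}}\in\scr{F}_{n,1}}$, and restricting to an arbitrary upward-closed subfamily breaks the Pr\"{u}fer-code symmetry exploited in Section~\ref{sec:lem_proof}. Two complementary approaches seem natural. First, one can attempt a positive-correlation inequality on the forest poset: both ``$F$ is a spanning tree'' and ``$(F,\phi)\in\scr{A}_H$'' are monotone-increasing under edge-addition, and the log-supermodular form $\mass{F}=\prod_i w_i^{d_F(i)}$ suggests that conditioning on a monotone event should not decrease the probability of being a tree; however, standard FKG does not directly apply because the weighted-forest measure is supported only on forests and the natural edge-inclusion partial order on forests is not a lattice. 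Second, one can refine the Pr\"{u}fer-code and generating-function analysis of Section~\ref{sec:lem_proof} to track only those codes producing pairs inside the prescribed upward-closed family; this has the advantage of directly yielding non-asymptotic bounds, at the cost of considerable bookkeeping. Either route must ultimately replace the $(1+\eps)$ asymptotic factor by a precise pointwise constant — and this tightening, already mysterious even in the unrestricted case, is what I expect to be the crux of the difficulty.
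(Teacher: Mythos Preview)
The statement you are attempting is Conjecture~\ref{conj:precise}, which the paper presents explicitly as \emph{open}; there is no proof in the paper to compare against. The paper establishes only the asymptotic Theorem~\ref{thm:main} under the additional bridge-alterable hypothesis, and then offers this conjecture as a strengthening left for future work.

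Your proposal is a research outline rather than a proof, and you are candid about this. The reduction is sound: bridge-addability alone does make each $\scr{A}_H$ upward-closed in $[H]$, and the contraction map behaves as you describe. The genuine gap is precisely where you place it, but it is worth stressing how far the required per-class claim lies beyond what the paper achieves even in the bridge-alterable setting. For the averaging over $H$ to work you need, for \emph{every} weight vector $\orr{w}$ with $\sum_i w_i = n$ and every admissible upward-closed subfamily, a connectivity probability of at least $\p{{\bf F}\mbox{ is connected}}$ with ${\bf F}$ uniform on $\scr{F}_n$, pointwise in $n$. Already for the full class $[H]$ this amounts to the assertion that the all-ones weight vector minimises $\p{{\bf F}_{\orr{w}}\mbox{ is connected}}$ among all $\orr{w}$ of a given sum --- itself an unproved statement that the methods of Section~\ref{sec:lem_proof} do not yield, since Lemma~\ref{lem:ratio_1_2} gives only the asymptotic $(1+\eps)\frac12$ ratio and the bound~(\ref{eqn.pend2}) discards a factor that matters for exact constants. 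Neither of your two suggested routes closes the gap: the FKG idea fails for the reason you name (the edge-inclusion order on forests is not a lattice, so log-supermodularity of $\mass{F}$ is not enough), and any refinement of the generating-function computation would first have to recover those lost constants before even addressing the upward-closed restriction. You have correctly isolated a natural line of attack and its principal obstruction, but the conjecture remains open.
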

This conjecture would of course yield the original conjecture. 


\bibliographystyle{plain}

\end{document}